\newcommand{\C}{\mathbb{C}}
\newcommand{\Z}{\mathbb{Z}}
\newcommand{\R}{\mathbb{R}}
\newcommand{\wt}{\mathrm{wt}_{\Z_3}}
\newcommand{\w}{\widetilde{\mathrm{wt}}_{\Z_3}}
\renewcommand{\geq}{\geqslant}
\renewcommand{\leq}{\leqslant}
\newtheorem{dummy}{}[section]
\newtheorem{theorem}[dummy]{Theorem}
\newtheorem{proposition}[dummy]{Proposition}
\newtheorem{conjecture}[dummy]{Conjecture}
\newtheorem{lemma}[dummy]{Lemma}
\theoremstyle{definition}
\newtheorem{example}[dummy]{Example}
\newtheorem{remark}[dummy]{Remark}
\newtheorem{definition}[dummy]{Definition}
\begin{document}

\title{Topology of $\Z_3$-equivariant Hilbert schemes}
\author[D.~Castro]{Deborah Castro}
\address{Department of Mathematics, San Francisco State University, 1600 Holloway Avenue, San Francisco, CA 94132, USA}
\email{drcastro@mail.sfsu.edu}
\author[D.~Ross]{Dustin Ross}
\address{Department of Mathematics, San Francisco State University, 1600 Holloway Avenue, San Francisco, CA 94132, USA}
\email{rossd@sfsu.edu}

\begin{abstract}
Motivated by work of Gusein-Zade, Luengo, and Melle-Hern\'andez, we study a specific generating series of arm and leg statistics on partitions, which is known to compute the Poincar\'e polynomials of $\Z_3$-equivariant Hilbert schemes of points in the plane, where $\Z_3$ acts diagonally. This generating series has a conjectural product formula, a proof of which has remained elusive over the last ten years. We introduce a new combinatorial correspondence between partitions of $n$ and $\{1,2\}$-compositions of $n$, which behaves well with respect to the statistic in question. As an application, we use this correspondence to compute the highest Betti numbers of the $\Z_3$-equivariant Hilbert schemes.
\end{abstract}

\maketitle

%\tableofcontents

\section{Introduction}

\subsection{Motivation}

Let $X$ be a smooth complex surface and let $X^{[n]}$ denote the Hilbert scheme of $n$ points on $X$; that is, the moduli space of zero-dimensional, length-$n$ subschemes of $X$. Then $X^{[n]}$ is a smooth, quasi-projective variety of complex dimension $2n$, and its topology has a rich structure. In particular, G\"{o}ttsche proved that the Poincar\'e polynomials of $X^{[n]}$ are determined from the Betti numbers of $X$ through a simple product formula \cite{Gottsche}.

In \cite{GZLMH}, Gusein-Zade, Luengo, and Melle-Hern\'andez initiated a study of the topology of \textit{equiviariant} Hilbert schemes, which can be thought of as parametrizing zero-dimensional substacks of the finite quotient stack $[X/G]$. They proved a product formula for Poincar\'e polynomials of $[\C^2/\Z_k]^{[n]}$ where the cyclic group $\Z_k$ acts anti-diagonally, but the diagonal action proved more elusive. In the case of the diagonal action of $\Z_3$, they discovered the following conjectural product formula.

\begin{conjecture}[Gusein-Zade, Luengo, and Melle-Hern\'andez \cite{GZLMH}]\label{conjecture}
Let the cyclic group $\Z_3$ act diagonally on $\C^2$ and let $b_k(-)$ denote the $k$th topological Betti number. Then
\[
1+\sum_{n> 0 \atop k\geq 0}b_{2k}\left(\left[\C^2/\Z_3 \right]^{[n]}\right)t^kq^n=\prod_{m\geq 1}\frac{1}{1-t^{m-1}q^{3m-2}}\frac{1}{1-t^{m}q^{3m-1}}\frac{1}{1-t^{m-1}q^{3m}}.
\]
\end{conjecture}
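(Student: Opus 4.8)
The plan is to convert the topological statement into a purely combinatorial identity and then to prove that identity by a weight-preserving bijection. The first step is standard: the diagonal torus $(\C^*)^2$ acting on $\C^2$ lifts to $\left[\C^2/\Z_3\right]^{[n]}$ with isolated fixed points indexed by $\Z_3$-invariant monomial ideals, that is, by partitions $\lambda$ with $|\lambda|=n$. A Bialynicki--Birula decomposition with respect to a generic one-parameter subgroup then exhibits the space as a disjoint union of affine cells whose dimensions recover the (even) Betti numbers; concretely, $b_{2k}$ becomes the number of partitions with $\wt(\lambda)=k$, where $\wt$ counts the attracting tangent directions of $\Z_3$-weight zero, read off from the arm--leg decomposition of the tangent space at $I_\lambda$. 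Thus the left-hand side equals the partition generating series $\sum_\lambda t^{\wt(\lambda)}q^{|\lambda|}$, and the conjecture reduces to identifying this series with the product on the right. (Per the abstract this reduction is already known, so I would take it as the starting point.)

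Next I would read the right-hand side combinatorially. Each factor $1/(1-t^{a}q^{s})$ contributes parts of a fixed size $s$ carrying a fixed $t$-weight $a$, and as $m$ ranges over $m\geq 1$ the three families $s=3m-2,\,3m-1,\,3m$ exhaust all positive integers, sorted by residue mod $3$. Hence the right-hand side is the generating series $\sum_\mu t^{c(\mu)}q^{|\mu|}$ over all partitions $\mu$, where a part of size $s$ contributes the additive weight $c(s)$ dictated by its residue, and $c(\mu)=\sum_{s\in\mu}c(s)$. The target identity is therefore the assertion that, at each fixed $n$, the nonlocal diagrammatic statistic $\wt$ is equidistributed with the simple additive statistic $c$.

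To produce such an equidistribution I would exploit the correspondence between partitions of $n$ and $\{1,2\}$-compositions of $n$ advertised in the abstract, using it as the engine that converts a partition into an ordered sequence of colored parts. The hope is that each step of the correspondence peels off a contribution to $\wt$ matching the additive weight $c(s)$ of a single part on the product side, so that the total statistic is transported correctly and the two series agree term by term; it may well be necessary to first replace $\wt$ by the equivalent statistic $\w$, which I expect to be better adapted to this decomposition. Carrying this out requires tracking how arms and legs---and in particular their residues mod $3$---are reorganized at each step, and verifying that the accumulated $\Z_3$-weight is preserved.

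The main obstacle is precisely this last verification. The statistic $\wt(\lambda)$ is genuinely global: altering a single part of $\lambda$ changes the arms and legs of many boxes at once, so there is no naive box-by-box matching with the additive part-weights on the product side. The crux of the argument---and the reason the full conjecture has resisted proof---will be to show that the $\{1,2\}$-composition correspondence intertwines $\wt$ with $c$ across the \emph{entire} range of Betti numbers, not only on the extremal stratum carrying the highest Betti numbers. I expect the decisive difficulty to lie in the middle-degree terms, where many partitions of a common size $n$ have arm--leg residues that interact in complicated ways, and where even an explicit bijection is hard to verify to be weight-preserving without a genuinely new structural insight into how $\wt$ localizes onto the parts produced by the correspondence.
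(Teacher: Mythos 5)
The statement you are asked to prove is labeled a \emph{conjecture} in the paper, and the paper does not prove it: it only establishes the extremal coefficients (the highest Betti numbers, Theorem \ref{maintheorem}), leaving the full product formula open. Your proposal correctly reproduces the known reduction --- the Bia{\l}ynicki--Birula localization at monomial ideals turning the left-hand side into $\sum_\lambda t^{\wt(\lambda)}q^{|\lambda|}$ (this is Conjecture \ref{conjecture2} in the paper, taken from \cite{GZLMH}), the replacement of $\wt$ by the cleaner statistic $\w$ (Proposition \ref{prop:leg}), and the intended use of the $\{1,2\}$-composition correspondence (Proposition \ref{prop:decompose} and Theorem \ref{thm:compare}). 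You also correctly read the right-hand side as an additive, residue-dependent weight on parts. All of this matches the paper's setup.

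However, your proposal stops exactly where the paper does, and you say so yourself: the ``main obstacle'' you identify --- showing that the correspondence intertwines $\wt$ (equivalently $\w$) with the additive statistic across all degrees --- is not an implementation detail but the entire open content of the conjecture. Concretely, after Theorem \ref{thm:compare} the problem becomes: determine which $\{1,2\}$-compositions are admissible (lie in the image of $\phi$) and show their generating function equals the product; the paper only resolves this for compositions with at most one occurrence of $1$ (Proposition \ref{prop:admissible}), which yields the top Betti numbers and nothing more. So while your outline is faithful to the paper's strategy and accurately locates the difficulty, it does not constitute a proof of the statement, and no proof of it exists in the paper to compare against. If your goal were instead to prove Theorem \ref{maintheorem}, the missing ingredients are the stair-step/landing-number analysis (Lemma \ref{lem:stair}) and the exclusion of compositions containing the subsequence $(2,2,1,2)$ (Example \ref{example:unadmissible}); neither appears in your sketch.
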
 

In order to study the topology of $[\C^2/G]^{[n]}$, Gusein-Zade, Luengo, and Melle-Hern\'andez worked with a combinatorial interpretation of the Betti numbers, which we review in Subsection \ref{sec:combform} below. This combinatorial approach generalizes methods of Ellingsrud and Str{\o}mme \cite{ES} in the case of $(\C^2)^{[n]}$. Using the combinatorial formulation, Betti numbers can easily be computed for small $n$, and the computations evince the product formula in Conjecture \ref{conjecture}.

One application of the new combinatorial techniques that we develop in this paper is the verification that Conjecture \ref{conjecture} correctly computes the top Betti number of $\left[\C^2/\Z_3 \right]^{[n]}$ for all $n$, which is the content of the following theorem.

\begin{theorem}\label{maintheorem}
Let the cyclic group $\Z_3$ act diagonally on $\C^2$. Then, for all $n\geq 2$,
\[
b_{2n}\left(\left[\C^2/\Z_3 \right]^{[2n]}\right)=1, \hspace{.5cm} b_{2n}\left(\left[\C^2/\Z_3 \right]^{[2n+1]}\right)=2,
\]
and all higher Betti numbers vanish.
\end{theorem}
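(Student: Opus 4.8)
The plan is to work entirely on the combinatorial side. By the combinatorial formula recalled in Subsection~\ref{sec:combform}, the Poincar\'e polynomial of $\left[\C^2/\Z_3\right]^{[N]}$ is
\[
\sum_{k\geq 0} b_{2k}\!\left(\left[\C^2/\Z_3\right]^{[N]}\right)t^k \;=\; \sum_{\lambda\vdash N} t^{\wt(\lambda)},
\]
so the claim that all Betti numbers above cohomological degree $2\lfloor N/2\rfloor$ vanish is exactly the bound $\wt(\lambda)\leq \lfloor N/2\rfloor$ for every $\lambda\vdash N$, while the two asserted values are the cardinalities of the maximizer sets $\{\lambda\vdash N : \wt(\lambda)=\lfloor N/2\rfloor\}$ for $N=2n$ and $N=2n+1$. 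Thus the theorem reduces to two tasks: (i) prove the sharp upper bound $\max_{\lambda\vdash N}\wt(\lambda)=\lfloor N/2\rfloor$, which simultaneously fixes the top nonvanishing degree at $2n$ and gives the vanishing of all higher Betti numbers; and (ii) enumerate the partitions attaining the maximum.

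Both tasks I would handle through the correspondence between partitions of $N$ and $\{1,2\}$-compositions of $N$. The decisive feature is that under this bijection the arm--leg statistic $\wt$ is transported to the more transparent statistic $\w$ on compositions, which is local in the parts and their adjacencies. I would first translate the bound: writing a $\{1,2\}$-composition as a word in the letters $1$ and $2$, I expect each letter $2$ to contribute at most $1$ to $\w$, each letter $1$ to contribute $0$, and each adjacency cross-term to be nonpositive, so that $\w\leq(\text{number of }2\text{'s})\leq\lfloor N/2\rfloor$. Establishing these per-part and per-adjacency inequalities, and in particular pinning down exactly when each one is an equality, is the technical heart of the argument and the step I expect to be the main obstacle; it is precisely where the definition of $\w$ and the adjacency rules produced by the correspondence must be used in full.

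With the bound in hand, counting maximizers becomes a finite, explicit problem on words: a composition attains $\w=\lfloor N/2\rfloor$ only if it uses the maximal number of $2$'s \emph{and} every local inequality holds with equality. For even $N=2n$ this forces the single word $2^{n}$, giving exactly one maximizer. For odd $N=2n+1$ exactly one letter $1$ must appear among $n$ letters $2$, and I would show that the equality-everywhere condition restricts the position of this $1$ to exactly two admissible slots, yielding precisely two maximizers once $N\geq 5$, i.e. $n\geq 2$; the small cases $N=2,3$ are degenerate (the second admissible configuration requires room for at least two $2$'s) and are excluded by the hypothesis $n\geq 2$. Transporting these maximizing compositions back through the bijection identifies the extremal partitions and completes the count.

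Assembling the pieces, task (i) yields the vanishing of all Betti numbers above degree $2n$ and identifies $b_{2n}$ as the top nonvanishing one, while task (ii) gives $b_{2n}=1$ in the even case and $b_{2n}=2$ in the odd case. The overall difficulty is concentrated in the equality analysis of the transported statistic $\w$: granting the clean additive-with-adjacency form that the correspondence is designed to produce, the enumeration of maximizers is routine, but verifying the sharp local inequalities together with their equality cases is what makes the whole argument go through.
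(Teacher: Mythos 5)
There are two genuine gaps here, and they interact. First, you conflate $\wt$ with $\w$. The statistic that the partition-to-$\{1,2\}$-composition correspondence transports to ``number of $2$'s'' is the \emph{modified} weight $\w$ (the leg condition $l(\square)>1$ dropped), and the transport is an exact identity with no adjacency cross-terms: $\rho_1$ preserves $\w$ and $\psi_2$ increases it by exactly $1$ (Theorem~\ref{thm:compare}), so $\w(\lambda)$ \emph{equals} the number of $2$'s in $\phi(\lambda)$. Your proposed framework of per-letter inequalities with nonpositive cross-terms, whose equality cases would then ``restrict the position of the $1$ to exactly two slots,'' does not match this structure: since the statistic is exactly the count of $2$'s, \emph{every} admissible word with one $1$ and $n$ $2$'s is a maximizer, and the count of such words is $3$, not $2$ (this is $\tilde b_{n,2n+1}=3$ in Theorem~\ref{combinatorialtheorem}). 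The discrepancy between $3$ and the asserted $b_{2n}=2$ is resolved only by converting back from $\w$ to $\wt$, which costs the nontrivial product factor $\prod_{m\geq 1}(1-t^{m-1}q^{3m})/(1-t^mq^{3m})$ of Proposition~\ref{prop:leg} (proved by a separate bijection $\Lambda'\times\Lambda''\to\Lambda$). Your plan has no counterpart to this step, and without it you would land on the wrong value in the odd case.

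Second, you have misplaced where the difficulty lies in the enumeration. The map $\phi$ from partitions to $\{1,2\}$-compositions is injective but very far from surjective (Fibonacci versus partition growth), so the maximizer count is governed entirely by \emph{admissibility}: which words lie in the image of $\phi$. One must show that $(2,2,\dots,2)$ is admissible, that exactly three of the $n+1$ single-$1$ words are admissible (the $1$ in the first, second, or last position), and that the rest are not. The paper does this via the forbidden subsequence $(2,2,1,2)$ (Example~\ref{example:unadmissible}) and the stair-step/landing-number invariant preserved by $\psi_2$ (Lemma~\ref{lem:stair} and Proposition~\ref{prop:admissible}). Your ``equality-everywhere'' criterion is not a substitute for this: it is vacuous once the statistic is known to be exactly the number of $2$'s, and it gives no mechanism for deciding which words are realized by actual Young diagrams.
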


\begin{remark}
For $n=0$ or $1$, it is immediate from the combinatorial formulation below that $b_{0}\left(\left[\C^2/\Z_3 \right]^{[1]}\right)=b_{2}\left(\left[\C^2/\Z_3 \right]^{[2]}\right)=b_{2}\left(\left[\C^2/\Z_3 \right]^{[3]}\right)=1$, and all higher Betti numbers vanish.
\end{remark}

\subsection{Combinatorial formulation}\label{sec:combform}

Concretely, the equivariant Hilbert scheme can be defined as the following set of ideals:
\begin{align*}
\left[\C^2/\Z_3 \right]^{[n]}&=\{I\subseteq\C[x,y]:\dim_\C(\C[x,y]/I)=n \text{ and } \Z_3\cdot I=I\}\\
&=\left((\C^2)^{[n]}\right)^{\Z_3}.
\end{align*}
Here, $\Z_3=\langle\xi_3\rangle$ is generated by a primitive third root of unity, and $\xi_3\cdot(x,y)=(\xi_3 x,\xi_3 y)$. The algebraic torus $(\C^*)^2$ acts on $\left[\C^2/\Z_3 \right]^{[n]}$, and, upon choosing a general subtorus $\C^*$, the Bia{\l}ynicki-Birula decomposition allows one to compute the Betti numbers of $\left[\C^2/\Z_3 \right]^{[n]}$ from knowledge of the $\C^*$-weights on the tangent space at each $\C^*$-fixed point \cite{BB}. Since the $\C^*$-fixed points are monomial ideals, which correspond to partitions, the left-hand side of Conjecture \ref{conjecture} can be written as a sum over partitions.

To set combinatorial notation, let $\Lambda$ denote the set of partitions. For each partition $\lambda\in\Lambda$, we represent $\lambda$ as a (southwest-justified) Young diagram.  Let $|\lambda|$ denote the number of boxes in $\lambda$, and for a given box $\square\in\lambda$, let the arm $a(\square)$ and leg $l(\square)$ denote the number of boxes above and to the right of $\square$, respectively; see Figure \ref{fig:youngdiagram}. Define the $\Z_3$-weight of a partition to be
\begin{equation}\label{def:weight1}
\wt(\lambda):=\left|\{\square\in\lambda:l(\square)>1 \text{ and }a(\square)+1=l(\square) \text{ mod } 3\}\right|.
\end{equation}
A standard analysis of the tangent spaces in the Hilbert schemes, carried out in \cite{GZLMH}, shows that Conjecture \ref{conjecture} is equivalent to the following combinatorial formula.

\begin{conjecture}[Combinatorial reformulation of Conjecture \ref{conjecture}; c.f. \cite{GZLMH}]\label{conjecture2}
With $\wt(\lambda)$ defined as in \eqref{def:weight1}, 
\[
\sum_{\lambda\in\Lambda}q^{|\lambda|}t^{\wt(\lambda)}=\prod_{m\geq 1}\frac{1}{1-t^{m-1}q^{3m-2}}\frac{1}{1-t^{m}q^{3m-1}}\frac{1}{1-t^{m-1}q^{3m}}.
\]
\end{conjecture}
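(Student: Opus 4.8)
The plan is to reduce the identity to a purely combinatorial equidistribution statement and then to prove the latter by promoting the partition/$\{1,2\}$-composition correspondence to a fully weight-preserving bijection.

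First I would unwind the right-hand side. Expanding each geometric factor $\frac{1}{1-t^{m-1}q^{3m-2}}$, $\frac{1}{1-t^{m}q^{3m-1}}$, $\frac{1}{1-t^{m-1}q^{3m}}$ as a power series exhibits the product as a sum over multisets of colored parts: a part of size $3m-2$ or $3m$ carries weight $m-1$, while a part of size $3m-1$ carries weight $m$. Since the sizes $3m-2,\,3m-1,\,3m$ range over all positive integers exactly once, this presents the right-hand side as $\sum_{\mu\in\Lambda}q^{|\mu|}t^{W(\mu)}$, where $W(\mu)$ is the sum over the parts of $\mu$ of these prescribed weights. Thus Conjecture \ref{conjecture2} is equivalent to the statement that the pairs $(|\lambda|,\wt(\lambda))$ and $(|\mu|,W(\mu))$ are equidistributed over $\Lambda$; equivalently, for each $n$ there is a bijection $\Phi\colon\Lambda_n\to\Lambda_n$ with $\wt(\lambda)=W(\Phi(\lambda))$. (Setting $t=1$ collapses both sides to $\prod_k(1-q^k)^{-1}$, a useful check, and one verifies by hand that $\wt$ and $W$ are already equidistributed, though not equal partition-by-partition, for small $n$.)

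Next I would construct $\Phi$. The intended route is through the partition/$\{1,2\}$-composition correspondence: I would read off from $\lambda$ a canonical $\{1,2\}$-composition, for instance from the residues modulo $3$ along the boundary lattice path or from the successive diagonal hooks, together with enough decorating data to recover $\lambda$, and show that $\wt(\lambda)$ decomposes as a sum of local contributions indexed by the parts of this composition. The target $\mu=\Phi(\lambda)$ would then be assembled so that each part of $\mu$ reproduces exactly one such local contribution as its colored weight, with the three residue classes of part-sizes matching the three factors of the product. The key claims to verify are that $\Phi$ preserves $|\cdot|$ and that the local $\wt$-contributions equal the weights $m-1,\,m,\,m-1$ exactly. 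As a structural aid I would exploit the $3$-core/$3$-quotient decomposition $\lambda\leftrightarrow(\kappa,(\lambda^{(0)},\lambda^{(1)},\lambda^{(2)}))$, since the defining congruence $a(\square)+1\equiv l(\square)\pmod 3$ is a condition on hook data and should be governed by the quotient coordinates, the three quotient components being the natural candidates to be carried by the three factors. Independently, I would try to show that $F(q,t):=\sum_\lambda q^{|\lambda|}t^{\wt(\lambda)}$ and the product obey a common recursion, e.g.\ by peeling off the first column, under which every surviving box retains both its arm and its leg, so that $\wt$ changes only by the explicit contribution of the removed column, and then induct on $n$.

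The main obstacle is precisely the exactness of the weight bookkeeping. The threshold $l(\square)>1$ and the congruence modulo $3$ make $\wt$ a genuinely non-local statistic, and it is this threshold that breaks the symmetry among the three residue classes and forces the asymmetric exponents $(m-1,m,m-1)$. A correspondence that matches total size together with the extreme $t$-degrees is comparatively easy, and indeed that is all that is needed for the highest Betti numbers in Theorem \ref{maintheorem}, but the full conjecture demands that every intermediate coefficient agree, i.e.\ that the bijection be weight-preserving term by term. Establishing this exact matching, rather than an approximate or inequality-level one, is the crux, and is presumably the reason the conjecture has resisted proof; I would expect most of the effort to lie in proving that the $\wt$-contribution attached to each part of the associated $\{1,2\}$-composition is forced to equal the prescribed colored weight.
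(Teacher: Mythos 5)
You should first note a structural point: the statement you set out to prove is Conjecture \ref{conjecture2}, and the paper does not prove it --- it is precisely the open conjecture that motivates the paper. The authors only (i) prove in Proposition \ref{prop:leg} that it is equivalent to Conjecture \ref{combinatorialconjecture}, which replaces $\wt(-)$ by the modified weight $\w(-)$, and (ii) verify the extreme coefficients via Theorem \ref{combinatorialtheorem}. So there is no proof in the paper to compare yours against, and the question is only whether your proposal closes the gap. It does not. Your first step is sound: expanding the product as a sum over partitions $\mu$ with a colored weight $W(\mu)$ (weight $m-1$ on parts of size $3m-2$ or $3m$, weight $m$ on parts of size $3m-1$) is correct, as is the $t=1$ consistency check. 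But this is an exact restatement of the conjecture as an equidistribution claim, and everything after it is a program whose key objects are never constructed.

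The concrete gaps are these. First, the bijection $\Phi$ is never defined; you describe properties it should have, which is the conjecture itself. Second, the composition route cannot deliver the local weights you need: by Theorem \ref{thm:compare} (which, note, applies to $\w$, not $\wt$), each $2$ in the $\{1,2\}$-composition of Proposition \ref{prop:decompose} contributes exactly $1$ to the weight and each $1$ contributes $0$, so the contributions attached to composition parts are bounded, whereas the colored weights $m-1,m,m-1$ on the product side grow with part size; any matching must therefore be global, and the entire difficulty is displaced into characterizing the admissible compositions --- exactly the problem the paper flags as ``very difficult'' and solves only in the case of at most one $1$ (Proposition \ref{prop:admissible}). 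Third, the $3$-core/$3$-quotient decomposition governs boxes with hook length $a(\square)+l(\square)+1\equiv 0 \pmod 3$, whereas the statistic here is the diagonal condition $a(\square)-l(\square)+1\equiv 0 \pmod 3$, which is not a function of core/quotient data in any simple way; this mismatch is precisely why the diagonal action resisted the methods that settled the anti-diagonal case in \cite{GZLMH}. Fourth, the proposed first-column-peeling recursion does not close: removing the first column imposes a bound on the first column of the remaining diagram, so the sum does not factor, and the product side satisfies no matching recursion. You honestly identify the exact weight bookkeeping as the crux, but as written the proposal reduces the conjecture to an equivalent open statement rather than proving it.
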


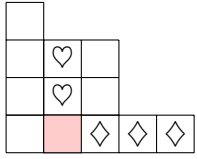
\begin{figure}
\begin{tikzpicture}[scale=.5]
    \draw[step=1cm,color=black] (0,0) grid (5,1);
    \draw[step=1cm,color=black] (0,1) grid (1,4);
    \draw[step=1cm,color=black] (1,1) grid (3,3);
    \draw[draw=black,fill=red!20] (1,0) rectangle (2,1);
    \node[] at (1.5,1.5){$\heartsuit$};
    \node[] at (1.5,2.5){$\heartsuit$};
    \node[] at (2.5,0.5){$\diamondsuit$};
    \node[] at (3.5,0.5){$\diamondsuit$};
    \node[] at (4.5,0.5){$\diamondsuit$};
\end{tikzpicture}
\caption{A partition $\lambda$ and a specified box $\square\in\lambda$ with $a(\square)=\#\heartsuit=2$, $l(\square)=\#\diamondsuit=3$, and $|\lambda|=12$.}\label{fig:youngdiagram}
\end{figure}

The condition $l(\square)>0$ in $\wt(-)$ can be cumbersome to work with, so we define a modified weight function:
\begin{equation}\label{def:weight}
\w(\lambda):=\left|\{\square\in\lambda:a(\square)+1=l(\square) \text{ mod } 3\}\right|.
\end{equation}
In Section \ref{sec:leg}, we prove that Conjecture \ref{conjecture2} is equivalent to the following formula, which is more natural from a combinatorial perspective.

\begin{conjecture}\label{combinatorialconjecture} With $\w(\lambda)$ defined as in \eqref{def:weight}, 
\[
\sum_{\lambda\in\Lambda}q^{|\lambda|}t^{\w(\lambda)}=\prod_{m\geq 1}\frac{1}{1-t^{m-1}q^{3m-2}}\frac{1}{1-t^{m}q^{3m-1}}\frac{1}{1-t^{m}q^{3m}}.
\]
\end{conjecture}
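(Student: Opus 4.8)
The plan is to recast the identity as an equidistribution statement on partitions and then to prove it by a size-preserving bijection, with the boundary lattice path as the organizing tool. First I would rewrite the right-hand side as an ordinary partition generating function: reading off the three factors, a part of size $N$ contributes $q^N t^{\lfloor (N+1)/3\rfloor}$, and a direct check shows that $\lfloor (N+1)/3\rfloor$ is exactly the number of columns congruent to $2$ modulo $3$ that meet a row of length $N$. Since this count is additive over the rows of a partition, the product equals $\sum_{\mu\in\Lambda}q^{|\mu|}t^{c_2(\mu)}$, where $c_2(\mu)$ is the number of boxes of $\mu$ lying in a column congruent to $2$ modulo $3$. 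Thus Conjecture~\ref{combinatorialconjecture} is \emph{equivalent} to the assertion that $\w$ and $c_2$ are equidistributed over the partitions of each fixed $n$, so it suffices to construct a size-preserving bijection $\lambda\mapsto\mu$ with $\w(\lambda)=c_2(\mu)$.

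Next I would reformulate $\w$ along the boundary path. Encode $\lambda$ as a bi-infinite word in the letters $0$ (horizontal step) and $1$ (vertical step), and let $g_i$ be the associated height, rising by $1$ on each $0$ and falling by $1$ on each $1$. Under the standard correspondence the boxes of $\lambda$ are the inversions $p<q$ with $w_p=0$ and $w_q=1$, the arm and leg being the numbers of $1$'s and $0$'s strictly between $p$ and $q$; a one-line computation then turns the defining congruence $a(\square)+1\equiv l(\square)\pmod 3$ into $g_p\equiv g_q\pmod 3$. Hence $\w(\lambda)$ counts precisely the \emph{monochromatic} inversions of the path, namely those whose two endpoints sit at equal height modulo $3$.

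The decisive step is then the bijection. Here I would build on the correspondence between partitions of $n$ and $\{1,2\}$-compositions of $n$ introduced in this paper, which is engineered to interact well with the arm-leg statistic: scanning the boundary path and recording the successive height residues as a $\{1,2\}$-composition should convert monochromatic inversions into the column-residue count $c_2$ on the image partition $\mu$. The main obstacle — and, I expect, the reason the formula has resisted proof for a decade — is the \emph{nonlocality} of $\w$: whether an inversion $(p,q)$ is monochromatic depends, through the height function, on the whole stretch of word between its endpoints, so a transfer operator scanning the word must remember the number of previously read horizontal steps at each height residue and therefore has no finite state space; equivalently, the underlying lattice model is interacting rather than free. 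To close the argument I would try to tame this defect inductively, enlarging the Durfee square one principal hook at a time and tracking the exact change in $\w$ in terms of the residues the new hook creates, or else design a sign-reversing involution cancelling every contribution that is not accounted for by the triple product; making either bookkeeping close is exactly where the difficulty concentrates.
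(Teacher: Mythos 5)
First, a point of order: the statement you were asked to prove is an open conjecture in this paper. The paper itself offers no proof of Conjecture~\ref{combinatorialconjecture}; it only (i) shows via Proposition~\ref{prop:leg} that it is equivalent to Conjecture~\ref{conjecture2}, and (ii) verifies its extreme coefficients (Theorem~\ref{combinatorialtheorem}) by proving that $\phi$ sends $\w$ to the number of $2$s in a $\{1,2\}$-composition (Proposition~\ref{prop:decompose} and Theorem~\ref{thm:compare}) and by analyzing admissibility only for compositions with at most one $1$ (Example~\ref{example:unadmissible}, Proposition~\ref{prop:admissible}). Your preliminary reductions are correct and worth recording: the product side does equal $\sum_{\mu}q^{|\mu|}t^{c_2(\mu)}$, since a part of size $N$ carries $t$-weight $\lfloor (N+1)/3\rfloor$, which is the number of columns $j\leq N$ with $j\equiv 2 \bmod 3$, so $c_2(\mu)=\sum_i\lfloor(\mu_i+1)/3\rfloor$ counts boxes in columns $\equiv 2 \bmod 3$; and your reformulation of $\w(\lambda)$ as the count of height-monochromatic inversions of the boundary word is exactly the paper's Lemma~\ref{lem:boundary} in path language. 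So the conjecture is indeed equivalent to the equidistribution of $\w$ and $c_2$ over partitions of each fixed $n$.

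The genuine gap is that your ``decisive step'' --- the size-preserving bijection with $\w(\lambda)=c_2(\mu)$ --- is never constructed, and you concede as much in your final sentence. Worse, the specific mechanism you propose cannot work as stated: the map $\phi$ from partitions to $\{1,2\}$-compositions is injective but very far from surjective, and the entire difficulty of the conjecture is concentrated in characterizing its image (the admissible compositions), a problem the paper explicitly flags as hard and resolves only in the regime of at most one occurrence of $1$. Hoping that ``scanning the boundary path and recording height residues should convert monochromatic inversions into $c_2$'' asserts the theorem rather than proving it; no correspondence between inversions of $\lambda$ and boxes of $\mu$ is defined, no invariant is tracked under your proposed hook-by-hook enlargement of the Durfee square, and no sign-reversing involution is exhibited. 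Your diagnosis of the obstruction (the nonlocality of $\w$, i.e.\ the absence of a finite-state transfer operator) is accurate and consistent with why the identity has resisted proof, but as submitted this is a research plan whose unproven core coincides exactly with the open problem; the statement remains unproven.
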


The only difference between the product formulas in Conjectures \ref{conjecture2} and \ref{combinatorialconjecture} is in the third term in the product. Theorem \ref{maintheorem} can then be reformulated combinatorially as follows.

\begin{theorem}\label{combinatorialtheorem}
If we define coefficients $\tilde b_{k,n}$ by the formula
\[
\sum_{\lambda\in\Lambda}q^{|\lambda|}t^{\w(\lambda)}=\sum_{k,n}\tilde b_{k,n}t^kq^n,
\] 
then, for $n\geq 2$,
\[
\tilde b_{n,2n}=1, \hspace{.5cm} \tilde b_{n,2n+1}=3,
\]
and $\tilde b_{k,n}=0$ for all $k>n/2$.
\end{theorem}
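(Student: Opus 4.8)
The plan is to read off all three assertions---the vanishing $\tilde b_{k,n}=0$ for $k>n/2$ together with the two extremal values---from the correspondence $\Phi$ between partitions of $n$ and $\{1,2\}$-compositions of $n$, whose whole point is to convert the rigid congruence $a(\square)+1\equiv l(\square)$ into the transparent act of counting $2$'s. Since the theorem defines $\tilde b_{k,n}$ as the number of $\lambda$ with $|\lambda|=n$ and $\w(\lambda)=k$, everything reduces to understanding how $\w$ distributes, and $\Phi$ is built so that $\w(\lambda)$ is controlled by the number of parts equal to $2$ in $\Phi(\lambda)$. As any $\{1,2\}$-composition of $n$ has at most $\lfloor n/2\rfloor$ parts equal to $2$, this compatibility immediately forces $\w(\lambda)\le\lfloor n/2\rfloor$, hence $\tilde b_{k,n}=0$ for all $k>n/2$. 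This is the first and easiest step, and it already isolates the top-degree coefficients as the only ones left to compute.

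For the even case $n=2m$ the value $\w=m$ can be attained only by a partition whose composition realizes the maximal $2$-count $m$, and the unique such composition is $(2,2,\dots,2)$. I would then show that the fibre of $\Phi$ over this composition, restricted to weight-$m$ partitions, is a single point: concretely, exhibit one extremal candidate (the cases $n=4,6,8$ give $(3,1),(4,2),(4,3,1)$), verify $\w=m$ directly from \eqref{def:weight}, and prove no other weight-$m$ partition exists, yielding $\tilde b_{m,2m}=1$.

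The odd case $n=2m+1$ is where the content lies. A $\{1,2\}$-composition of $n$ now has an odd number of $1$'s, so a maximal $2$-count of $m$ must be accompanied by a genuine parity defect, and the three extremal partitions should arise from the three minimal ways of absorbing that defect while keeping the weight at $m$. On the (conjectural) product-formula side these are exactly the three efficient small factors, of $q$-sizes $1,3,5$ (types $A_1$, $C_1$, $B_2$), and the plan is to match these, through $\Phi$, with three explicit weight-$m$ partitions; for $n=5,7,9$ the weight-$m$ partitions are $\{(5),(3,2),(2,1^3)\}$, $\{(6,1),(4,2,1),(3,1^4)\}$, and $\{(7,2),(5,3,1),(4,2,1^3)\}$. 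Checking directly that three such partitions have $\w=m$ and that no others do gives $\tilde b_{m,2m+1}=3$ for $m\ge2$; the degenerate cases $n\le3$, where the size-$5$ configuration is unavailable, are handled by hand and produce the boundary behaviour recorded in the Remark.

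The hard part will be precisely this odd count: proving that the defect can be absorbed in exactly \emph{three} ways. The bound and the even value require only that $\Phi$ be weight-compatible and injective on the extremal locus, but pinning the odd answer to $3$---rather than to the $m+1$ naive placements of the lone $1$, or to some other near-maximal count---demands genuine control of the fibres of $\Phi$ over the $2$-maximal compositions. I expect the crux to be a classification lemma showing that among all partitions carrying the maximal number of boxes with $a(\square)+1\equiv l(\square)$, only the three listed families survive; equivalently, that $\Phi$ is a bijection at the top weight and that its image there consists of exactly the three defect-types of sizes $1,3,5$.
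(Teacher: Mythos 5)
Your overall strategy coincides with the paper's: decompose each partition uniquely into a $\{1,2\}$-composition via $\rho_1$ and $\psi_2$ (Proposition \ref{prop:decompose}), use the fact that $\w(\lambda)$ equals the number of $2$'s in the composition (Theorem \ref{thm:compare}) to get the vanishing $\tilde b_{k,n}=0$ for $k>n/2$ immediately, and reduce the two extremal coefficients to deciding which compositions with the maximal number of $2$'s are admissible. Your explicit extremal partitions for $n\le 9$ are all correct, and your identification of where the difficulty lies is accurate. But the proposal stops exactly where the proof has to begin: the two steps you leave open are the entire content of the theorem.

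The first gap is the inclusion statement: you never prove that $(2,\dots,2)$, $(1,2,\dots,2)$, $(2,1,2,\dots,2)$ and $(2,\dots,2,1)$ are admissible for \emph{all} lengths. Exhibiting the images for small $n$ does not do this, since admissibility requires verifying, at every stage of a length-$n$ sequence of maps, the well-definedness constraints $j\ge k-2$ for $\rho_1$ and $j\le k+3$ for $\psi_2$; this needs an inductively preserved invariant, which the paper supplies via the stair-step property and the bound $L(\lambda)\le 2$ (Lemma \ref{lem:stair}, Proposition \ref{prop:admissible}). The second and more serious gap is the exclusion: you explicitly defer the argument that only $3$ of the $m+1$ placements of the lone $1$ survive, calling it a classification lemma you ``expect'' to exist. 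The paper's mechanism is short and local: the consecutive pattern $(2,2,1,2)$ can never occur in an admissible composition because $\psi_2\psi_2\rho_1\psi_2$ is undefined on every $\lambda$ (Example \ref{example:unadmissible}), which forces the $1$ into position $1$, $2$, or last. Without such a local obstruction your count of $3$ is unsupported, and the appeal to the ``three small factors'' of the product formula cannot substitute for it, since that product formula is exactly the conjecture under investigation. A minor further point: because $\phi$ is injective and weight-compatible, uniqueness of the weight-$m$ partition of $2m$ is automatic; the only issue in the even case is existence, i.e., admissibility of $(2,\dots,2)$, so the step ``prove no other weight-$m$ partition exists'' requires no separate work.
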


\subsection{Methods}

In Proposition \ref{prop:decompose}, we show that every partition of $n$ can be decomposed uniquely into a $\{1,2\}$-composition of $n$; that is, a sequence $(a_1,\dots,a_k)$ where $a_i\in\{1,2\}$ and $\sum_{i=1}^k a_i=n$. Given such a composition corresponding to a partition $\lambda$, we then prove in Theorem \ref{thm:compare} that $\w(\lambda)$ is equal to the number of times $2$ appears. Therefore, Conjecture \ref{combinatorialconjecture} boils down to understanding exactly which compositions arise from partitions. In general, this seems to be a very difficult question: partition numbers grow at a much slower and more mysterious rate than the number of $\{1,2\}$-compositions, which are counted by Fibonacci numbers. In Proposition \ref{prop:admissible}, we explicitly describe the $\{1,2\}$-compositions that arise when there is at most one occurrence of $1$ in the composition, and Theorem \ref{combinatorialtheorem} follows.

\subsection{Generalizations and relation to work of others}

In a forthcoming paper, Paul Johnson has generalized the product formula in Conjecture \ref{conjecture} to quotients of $\C^2$ by any finite abelian group \cite{Johnson}. He has also proved that the product formula holds asymptotically as $q\rightarrow\infty$. Since his asymptotic results compute the small (relative to $n$) Betti numbers, our methods, in the $\Z_3$ case, provide evidence for Conjecture \ref{conjecture} that is orthogonal to his.

Our original goal was to devise methods that would be applicable to more general actions of finite abelian groups. A natural generalization of the situation studied herein is the diagonal action of $\Z_k$ on $\C^2$; combinatorially, this amounts to replacing the mod $3$ conditions with mod $k$ conditions. In that generality, Proposition \ref{prop:leg} and the second part of Theorem \ref{thm:compare} readily generalize, as the reader may check; however, the first part of Theorem \ref{thm:compare} is special to the $\Z_3$ case.

\subsection{Acknowledgements}

The authors are grateful to Federico Ardila, Matthias Beck, and Emily Clader for their interest in this work and for comments on early drafts of this manuscript, as well as the Department of Mathematics at San Francisco State University for providing a supportive and encouraging atmosphere for working on this project. They are also indebted to Paul Johnson for instructive conversations regarding orbifold Hilbert schemes, and for sharing his draft \cite{Johnson}.

\section{Resolving the leg condition}\label{sec:leg}

The following theorem allows us to work with the modified weight $\w(\lambda)$, which is more conducive to our combinatorial methods.

\begin{proposition}\label{prop:leg}
With $\wt(\lambda)$ and $\w(\lambda)$ defined as in \eqref{def:weight1} and \eqref{def:weight}, respectively, we have
\[
\sum_{\lambda\in\Lambda}q^{|\lambda|}t^{\w(\lambda)}=\left(\prod_{m\geq 1} \frac{1-t^{m-1}q^{3m}}{1-t^{m}q^{3m}}\right)\sum_{\lambda\in\Lambda}q^{|\lambda|}t^{\wt(\lambda)}
\]
\end{proposition}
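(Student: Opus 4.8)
The plan is to prove the identity by a single combinatorial operation on partitions: inserting three equal parts. The correction factor $\prod_{m\geq1}\frac{1-t^{m-1}q^{3m}}{1-t^m q^{3m}}$ will emerge as the generating function recording these triples, once one checks that $\w$ and $\wt$ agree on the partitions that survive after all triples are stripped away.

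First I would record the elementary observation underlying the comparison. Since $\w$ counts every box with $a(\square)+1\equiv l(\square)\pmod 3$, while $\wt$ additionally requires the leg to be positive, the boxes counted by $\w$ but not $\wt$ are exactly the leg-zero boxes (rightmost in their row) with $a(\square)\equiv 2\pmod 3$. The rightmost box of a row of value $v$ has arm equal to the number of rows of value $v$ strictly above it, so among the $m_v$ rows of a given value $v$ these arms run through $0,1,\dots,m_v-1$. In particular, if every part of $\lambda$ occurs at most twice, then all such arms lie in $\{0,1\}$, no leg-zero box satisfies $a\equiv 2\pmod 3$, and hence $\w(\lambda)=\wt(\lambda)$.

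The crux is the following Key Lemma: if $\lambda'$ is obtained from $\lambda$ by increasing the multiplicity of some value $v$ by three, then $|\lambda'|=|\lambda|+3v$, $\w(\lambda')=\w(\lambda)+v$, and $\wt(\lambda')=\wt(\lambda)+(v-1)$. I would prove this box by box. Every box outside the block of $v$'s keeps its leg, while its arm is either unchanged or increases by exactly $3$ (it gains the three new rows in its column precisely when it lies below the block in a column $\leq v$); in either case its residue $a\bmod 3$ is preserved, so its status is unaffected. For the block itself, fix a column $c\in\{1,\dots,v\}$: its boxes have constant leg $v-c$ and arms forming a run of consecutive integers indexed by height, so lengthening that run by three consecutive values adds exactly one box satisfying $a+1\equiv l\pmod 3$. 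Summing over the $v$ columns gives the increment $v$ for $\w$; discarding the rightmost column $c=v$ (leg zero) gives the increment $v-1$ for $\wt$.

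With the Key Lemma in hand I would assemble the generating functions. Every partition $\lambda$ factors uniquely as a base partition $\bar\lambda$, in which each value occurs at most twice, together with a choice of $k_v=\lfloor m_v/3\rfloor$ triples for each $v$; iterating the Key Lemma gives $\w(\lambda)=\w(\bar\lambda)+\sum_v v\,k_v$, $\wt(\lambda)=\wt(\bar\lambda)+\sum_v (v-1)k_v$, and $|\lambda|=|\bar\lambda|+\sum_v 3v\,k_v$. Summing the geometric series in each $k_v$ yields
\[
\sum_{\lambda\in\Lambda}q^{|\lambda|}t^{\w(\lambda)}=\left(\sum_{\bar\lambda}q^{|\bar\lambda|}t^{\w(\bar\lambda)}\right)\prod_{v\geq1}\frac{1}{1-t^vq^{3v}},
\]
\[
\sum_{\lambda\in\Lambda}q^{|\lambda|}t^{\wt(\lambda)}=\left(\sum_{\bar\lambda}q^{|\bar\lambda|}t^{\wt(\bar\lambda)}\right)\prod_{v\geq1}\frac{1}{1-t^{v-1}q^{3v}},
\]
where both outer sums run over base partitions. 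By the second paragraph, $\w(\bar\lambda)=\wt(\bar\lambda)$ on base partitions, so the parenthesized prefactors coincide and cancel upon dividing, leaving exactly the claimed factor $\prod_{v\geq1}\frac{1-t^{v-1}q^{3v}}{1-t^vq^{3v}}$. The main obstacle is the bookkeeping in the Key Lemma, and the decisive simplification is the ``one box per column'' count, which makes the increments $v$ and $v-1$ transparent and, crucially, independent of the multiplicities already present.
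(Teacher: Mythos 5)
Your proposal is correct and follows essentially the same route as the paper: you decompose each partition into a ``base'' partition in which every part has multiplicity at most two (the paper's $\Lambda'$) together with triples of equal rows (the paper's $\Lambda''$), observe that $\w$ and $\wt$ agree on the base, and verify additivity of both weights under inserting a triple by the same leg-preserved/arm-shifted-by-a-multiple-of-three bookkeeping the paper uses for its bijection $f:\Lambda'\times\Lambda''\to\Lambda$. Your Key Lemma is just the single-triple version of the paper's additivity statement, and the generating-function assembly is identical.
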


\begin{proof}
The difference between $\wt(\lambda)$ and $\w(\lambda)$ only concerns boxes $\square\in\lambda$ where $l(\square)=0$ and $a(\square)+1=0$ mod $3$. Let $\Lambda'$ be the set of partitions that do not contain any such boxes. In other words, $\lambda\in\Lambda'$ if and only if the boundary of $\lambda$ does not contain more than 2 consecutive downward steps. By definition,
\[
\sum_{\lambda'\in\Lambda'}q^{|\lambda'|}t^{\w(\lambda')}=\sum_{\lambda'\in\Lambda'}q^{|\lambda'|}t^{\wt(\lambda')}.
\]

Let $\Lambda''$ denote the set of partitions where the size of each column is a multiple of three. There is a bijection
\[
f:\Lambda'\times\Lambda''\rightarrow\Lambda
\]
defined by inserting the rows of $\lambda''\in\Lambda''$ into $\lambda'\in\Lambda'$ at the maximum height so that the result is still a Young diagram. See Figure \ref{fig:bijection}.

\begin{figure}[h]
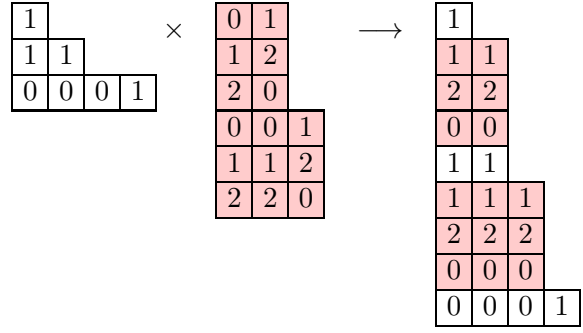

\[
\ytableausetup{boxsize=1.2em}
\begin{ytableau}
 1 \\
 1 &  1 \\
 0 &  0 & 0 & 1 
\end{ytableau}
\times\;\;\;
\begin{ytableau}
*(red!20) 0 & *(red!20) 1\\
*(red!20) 1 & *(red!20) 2\\
*(red!20) 2 & *(red!20) 0 \\
*(red!20) 0 & *(red!20) 0 & *(red!20) 1\\ 
*(red!20) 1 & *(red!20) 1 & *(red!20) 2\\ 
*(red!20) 2 & *(red!20) 2 & *(red!20) 0
\end{ytableau}
\;\;\;\longrightarrow\;\;\;
\begin{ytableau}
 1 \\
*(red!20) 1 & *(red!20) 1\\
*(red!20) 2 & *(red!20) 2\\
*(red!20) 0 & *(red!20) 0 \\
 1 &  1 \\
*(red!20) 1 & *(red!20) 1 & *(red!20) 1\\ 
*(red!20) 2 & *(red!20) 2 & *(red!20) 2\\ 
*(red!20) 0 & *(red!20) 0 & *(red!20) 0\\
 0 &  0 & 0 & 1 
\end{ytableau}
\]
\caption{An example of the map $f$. All boxes are labeled with the statistic $a(\square)+1-l(\square)$ mod $3$, and $\w(-)$ counts the number of zeros appearing in each diagram.}\label{fig:bijection}
\end{figure}

Each $\square\in\lambda'$ has a corresponding $\square\in f(\lambda',\lambda'')$, and the statistic $a(\square)+1-l(\square)$ mod $3$ is the same for both of these boxes because $f$ leaves the leg unchanged and only alters the arm by a multiple of three. Similarly, for each block of rows in $\lambda''$ that have the same length, there is a corresponding block of rows in $f(\lambda',\lambda'')$. The map $f$ leaves the legs of the boxes in these blocks unchanged, but it can change the arms, which alters the statistic by adding a constant to each column of the block (mod $3$). Since each column in each block has the same number of zeros, ones, and twos, this simply results in a permutation of the statistic in each column, which preserves the number of zeros. Thus, it follows that
\[
\wt(f(\lambda',\lambda''))=\wt(\lambda')+\wt(\lambda'') \;\;\;\text{ and }\;\;\; \w(f(\lambda',\lambda''))=\w(\lambda')+\w(\lambda'').
\]

Since
\[
\sum_{\lambda''\in\Lambda''}q^{|\lambda''|}t^{\w(\lambda'')}=\prod_{m\geq 1} \frac{1}{1-t^{m}q^{3m}}\;\;\;\text{ and }\;\;\;\sum_{\lambda''\in\Lambda''}q^{|\lambda''|}t^{\wt(\lambda'')}=\prod_{m\geq 1} \frac{1}{1-t^{m-1}q^{3m}},
\]
it follows that
\begin{align*}
\sum_{\lambda}q^{|\lambda|}t^{\w(\lambda)}&=\left(\sum_{\lambda'\in\Lambda'}q^{|\lambda'|}t^{\w(\lambda')}\right)\left(\sum_{\lambda''\in\Lambda''}q^{|\lambda''|}t^{\w(\lambda'')}\right)\\
&=\left(\sum_{\lambda'\in\Lambda'}q^{|\lambda'|}t^{\wt(\lambda')}\right)\left(\sum_{\lambda''\in\Lambda''}q^{|\lambda''|}t^{\wt(\lambda'')}\right)\left(\prod_{m\geq 1} \frac{1-t^{m-1}q^{3m}}{1-t^{m}q^{3m}}\right)\\
&=\left(\prod_{m\geq 1} \frac{1-t^{m-1}q^{3m}}{1-t^{m}q^{3m}}\right)\sum_{\lambda}q^{|\lambda|}t^{\wt(\lambda)}.
\end{align*}

\end{proof}

As a corollary, we see that Conjecture \ref{conjecture2} is equivalent to Conjecture \ref{combinatorialconjecture}, and Theorem \ref{maintheorem} is equivalent to Theorem \ref{combinatorialtheorem}.

\section{Partitions and compositions}

\subsection{Dyson maps}
We start by defining two maps on partitions, $\rho_1$ and $\psi_2$, that we will use throughout the rest of this paper. In \cite{Pak}, these maps are referred to as \emph{Dyson maps}.

Let $\lambda$ be a partition, represented as a Young diagram. The map $\rho_1$ removes the first row of $\lambda$, adds one box to it, then inserts these boxes as a new first column. See Figure \ref{fig:fig3.1}. We must take into consideration that the new column must be at least as big as what remains of the original first column. If $j$ is the number of boxes in the first row of $\lambda$ and $k$ is the number of boxes in the first column, this is equivalent to requiring that $j \geq k - 2$.

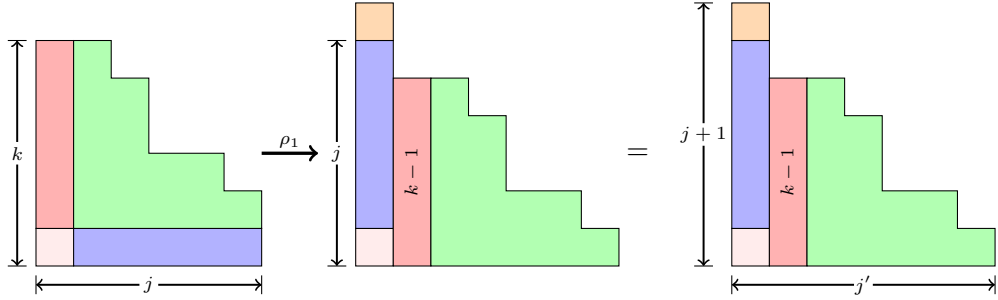
\begin{figure}[h]
    \centering
    \begin{tikzpicture}[scale=0.5]
    % FIRST YOUNG DIAGRAM
    \draw[fill=pink!30] (0,0) rectangle (1,1);
    \draw[fill=blue!30] (1,0) rectangle (6,1);
    \draw[fill=red!30] (0,1) rectangle (1,6);
    \draw[fill=green!30] (1,1) -- (6,1) -- (6,2) -- (5,2) -- (5,3) -- (3,3) -- (3,5) -- (2,5) -- (2,6)-- (1,6) -- cycle;
    
    % HORIZONTAL DELIMITS
    \draw (0,-0.25) -- (0,-0.75);
    \draw (6,-0.25) -- (6,-0.75);
    \draw[<-,line width=0.75pt] (0,-0.5) -- (2.75,-0.5);
    \draw[->,line width=0.75pt] (3.25,-0.5) -- (6,-0.5);
    \node at (3,-0.5) {\tiny $j$};
    
    % VERTICAL DELIMITS
    \draw (-0.75,0) -- (-0.25,0);
    \draw (-0.75,6) -- (-0.25,6);
    \draw[->,line width=0.75pt] (-0.5,3.3) -- (-0.5,6);
    \draw[->,line width=0.75pt] (-0.5,2.7) -- (-0.5,0);
    \node at (-0.5,3) {\tiny $k$}; 
    
    % ARROW
    \draw[->,line width=1.25pt] (6,3) -- (7.5,3);
    \node at (6.75,3.3) {\tiny $\rho_1$};
    
    % SECOND YOUNG DIAGRAM
    \draw[fill=pink!30] (8.5,0) rectangle (9.5,1);
    \draw[fill=blue!30] (8.5,1) rectangle (9.5,6);
    \draw[fill=orange!30] (8.5,6) rectangle (9.5,7);
    \draw[fill=red!30] (9.5,0) rectangle (10.5,5);
    
    % POLYGON
    \draw[fill=green!30] (10.5,0) -- (15.5,0) -- (15.5,1) -- (14.5,1) -- (14.5,2) -- (12.5,2) -- (12.5,4) -- (11.5,4) -- (11.5,5) -- (10.5,5) -- cycle;
    \node[rotate=90] at (10,2.5) {\tiny $k-1$};
    
    % VERTICAL DELIMITS
    \draw (7.75,0) -- (8.25,0);
    \draw (7.75,6) -- (8.25,6);
    \draw[->,line width=0.75pt] (8,3.3) -- (8,6);
    \draw[->,line width=0.75pt] (8,2.7) -- (8,0);
    \node at (8,3) {\tiny $j$};
    
    % EQUAL SIGN BETWEEN DIAGRAMS
    \node at (16,3) {=};
    
    % THIRD YOUNG DIAGRAM
    \draw[fill=pink!30] (18.5,0) rectangle (19.5,1);
    \draw[fill=blue!30] (18.5,1) rectangle (19.5,6);
    \draw[fill=orange!30] (18.5,6) rectangle (19.5,7);
    \draw[fill=red!30] (19.5,0) rectangle (20.5,5);
    
    % POLYGON
    \draw[fill=green!30] (20.5,0) -- (25.5,0) -- (25.5,1) -- (24.5,1) -- (24.5,2) -- (22.5,2) -- (22.5,4) -- (21.5,4) -- (21.5,5) -- (20.5,5) -- cycle;
    \node[rotate=90] at (20,2.5) {\tiny $k-1$};
    
    % VERTICAL DELIMITS
    \draw (17.5,0) -- (18,0);
    \draw (17.5,7) -- (18,7);
    \draw[->,line width=0.75pt] (17.75,3.2) -- (17.75,0);
    \draw[->,line width=0.75pt] (17.75,3.8) -- (17.75,7);
    \node at (17.75,3.5) {\tiny $j+1$};
    
    % HORIZONTAL DELIMITS
    \draw (18.5,-0.25) -- (18.5,-0.75);
    \draw (25.5,-0.25) -- (25.5,-0.75);
    \draw[<-,line width=0.75pt] (18.5,-0.5) -- (21.75,-0.5);
    \draw[->,line width=0.75pt] (22.25,-0.5) -- (25.5,-0.5);
    \node at (22,-0.5) {\tiny $j'$};
\end{tikzpicture}
    \caption{The map $\rho_1$ applied to $\lambda$, defined whenever $j \geq k - 2$.}
    \label{fig:fig3.1}
\end{figure}

Similarly, let $\psi_2$ be the map that removes the first column of $\lambda$, adds two boxes to it, then inserts these boxes as the new first row. See Figure \ref{fig:fig3.2}. We must take into consideration that the new row must be at least as big as what remains of the original first row, which is equivalent to the condition that $j \leqslant k + 3$. 

\begin{figure}[h]
    \centering
    \begin{tikzpicture}[scale=0.5]
    % FIRST YOUNG DIAGRAM
    \draw[fill=pink!30] (0,0) rectangle (1,1);
    \draw[fill=blue!30] (1,0) rectangle (6,1);
    \draw[fill=red!30] (0,1) rectangle (1,6);
    \draw[fill=green!30] (1,1) -- (6,1) -- (6,2) -- (5,2) -- (5,3) -- (3,3) -- (3,5) -- (2,5) -- (2,6)-- (1,6) -- cycle;
    
    % HORIZONTAL DELIMITS
    \draw (0,-0.25) -- (0,-0.75);
    \draw (6,-0.25) -- (6,-0.75);
    \draw[<-,line width=0.75pt] (0,-0.5) -- (2.75,-0.5);
    \draw[->,line width=0.75pt] (3.25,-0.5) -- (6,-0.5);
    \node at (3,-0.5) {\tiny $j$};
    
    % VERTICAL DELIMITS
    \draw (-0.75,0) -- (-0.25,0);
    \draw (-0.75,6) -- (-0.25,6);
    \draw[->,line width=0.75pt] (-0.5,3.3) -- (-0.5,6);
    \draw[->,line width=0.75pt] (-0.5,2.7) -- (-0.5,0);
    \node at (-0.5,3) {\tiny $k$};
    
    % ARROW BETWEEN DIAGRAMS
    \draw[->,line width=1.25pt] (6,3) -- (7.5,3);
    \node at (6.75,3.3) {\tiny $\psi_2$};
    
    % SECOND YOUNG DIAGRAM
    \draw[fill=pink!30] (8,0) rectangle (9,1);
    \draw[fill=red!30] (9,0) rectangle (14,1);
    \draw[fill=orange!30] (14,0) rectangle (15,1);
    \draw[fill=orange!30] (15,0) rectangle (16,1);
    \draw[fill=blue!30] (8,1) rectangle (13,2);
    
    % POLYGON
    \draw[fill=green!30] (8,2) -- (13,2) -- (13,3) -- (12,3) -- (12,4) -- (10,4) -- (10,6) -- (9,6) -- (9,7) -- (8,7) -- cycle;
    
    % HORIZONTAL DELIMITS
    \draw (8,-0.25) -- (8,-0.75);
    \draw (14,-0.25) -- (14,-0.75);
    \draw[<-,line width=0.75pt] (8,-0.5) -- (10.75,-0.5);
    \draw[->,line width=0.75pt] (11.25,-0.5) -- (14,-0.5);
    \node at (11,-0.5) {\tiny $k$};
    
    \node at (10.5,1.5) {\tiny $j-1$};
    
    % EQUAL SIGN BETWEEN DIAGRAMS
    \node[] at (16,3) {=};
    
    % THIRD YOUNG DIAGRAM
    \draw[fill=pink!30] (18,0) rectangle (19,1);
    \draw[fill=red!30] (19,0) rectangle (24,1);
    \draw[fill=orange!30] (24,0) rectangle (25,1);
    \draw[fill=orange!30] (25,0) rectangle (26,1);
    \draw[fill=blue!30] (18,1) rectangle (23,2);
    
    % POLYGON
    \draw[fill=green!30] (18,2) -- (23,2) -- (23,3) -- (22,3) -- (22,4) -- (20,4) -- (20,6) -- (19,6) -- (19,7) -- (18,7) -- cycle;
    
    % HORIZONTAL DELIMITS    
    \draw (18,-0.25) -- (18,-0.75);
    \draw (26,-0.25) -- (26,-0.75);
    \draw[<-,line width=0.75pt] (18,-0.5) -- (21.3,-0.5);
    \draw[->,line width=0.75pt] (22.7,-0.5) -- (26,-0.5);
    \node at (22,-0.5) {\tiny $k+2$};
    
    \node at (20.5,1.5) {\tiny $j-1$};
    
    % VERTICAL DELIMITS
    \draw (17.25,0) -- (17.75,0);
    \draw (17.25,7) -- (17.75,7);
    \draw[<-,line width=0.75pt] (17.5,7) -- (17.5,3.8);
    \draw[->,line width=0.75pt] (17.5,3.2) -- (17.5,0);
    \node at (17.5,3.5) {\tiny $k'$};
\end{tikzpicture}
    \caption{The map $\psi_2$ applied to $\lambda$, defined whenever $j \leqslant k + 3$.}
    \label{fig:fig3.2}
\end{figure}
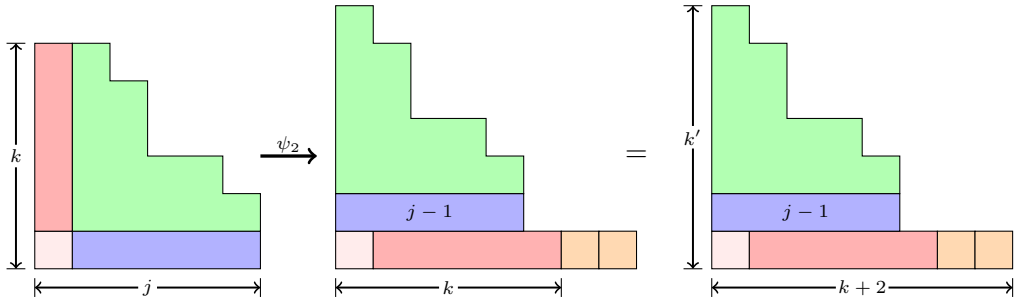

\subsection{Decomposing partitions}

Since $\rho_1$ increases the size of a partition by $1$, and $\rho_2$ increases the size of a partition by $2$, the following result relates partitions of size $n$ to $\{1,2\}$-composition of $n$.

\begin{proposition}\label{prop:decompose}
Every partition can be written uniquely as a sequence of $\rho_1$s and $\psi_2$s applied to the empty partition.
\end{proposition}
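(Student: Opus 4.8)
The plan is to prove the statement by induction on $|\lambda|$, peeling off the last Dyson map applied. The empty partition is the base case (the empty word), so the heart of the argument is the following invertibility claim: \emph{every nonempty partition $\mu$ lies in the image of exactly one of $\rho_1,\psi_2$, and has a unique preimage under that map}. Granting this, for each nonempty $\mu$ I can uniquely identify the final map in any expression for $\mu$ and strip it off, reducing the size by $1$ or $2$; the inductive hypothesis applied to the unique smaller preimage then yields both existence and uniqueness of the full word. Thus the whole proposition reduces to this one dichotomy.

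To set up the claim, I would first reformulate the maps so their inverses become transparent. Writing $r(\mu)$ for the largest part (length of the bottom row) and $c(\mu)$ for the number of parts (height of the leftmost column), one reads off from Figures \ref{fig:fig3.1} and \ref{fig:fig3.2} that $\rho_1(\lambda)$ deletes the bottom row of $\lambda$ and prepends a new leftmost column of height $r(\lambda)+1$, while $\psi_2(\lambda)$ deletes the leftmost column and prepends a new bottom row of length $c(\lambda)+2$; the constraints $j\geq k-2$ and $j\leq k+3$ are exactly what make the outputs Young diagrams. The key computation is then to track the pair $(r,c)$. For $\mu=\rho_1(\lambda)$ one gets $c(\mu)=r(\lambda)+1$ and $r(\mu)=r_2(\lambda)+1$, where $r_2$ is the second-largest part; since $r_2(\lambda)\leq r(\lambda)$ this forces $r(\mu)\leq c(\mu)$. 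Symmetrically, for $\mu=\psi_2(\lambda)$ one gets $r(\mu)=c(\lambda)+2$ and $c(\mu)=c_2(\lambda)+1$, which forces $r(\mu)>c(\mu)$. Hence the two images are disjoint, separated by the sign of $r(\mu)-c(\mu)$.

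To finish, I would prove the converse. Suppose $r(\mu)\leq c(\mu)$. I claim the partition $\lambda$ obtained by deleting the leftmost column of $\mu$ and then appending a new bottom row of length $c(\mu)-1$ is a well-defined Young diagram, lies in the domain $j\geq k-2$ of $\rho_1$, and satisfies $\rho_1(\lambda)=\mu$. The hypothesis $r(\mu)\leq c(\mu)$ is precisely what guarantees the appended row is long enough to serve as the new bottom row, and the domain inequality then holds automatically from $c_1(\mu)\geq c_2(\mu)$. Symmetrically, if $r(\mu)>c(\mu)$, deleting the bottom row of $\mu$ and prepending a new leftmost column of height $r(\mu)-2$ produces the unique $\psi_2$-preimage. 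In both cases the recipe is forced by $\mu$, so the preimage is unique and each map is a bijection onto its image.

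The main obstacle, and where I expect to spend the most care, is the bookkeeping in these verifications: confirming that deleting a column (resp.\ row) and re-adding a row (resp.\ column) genuinely returns $\mu$, that the relevant domain inequality always holds for the reconstructed preimage, and that the edge cases behave correctly — namely partitions with a single row or a single column, and the step down to the empty partition, where $r_2$ or $c_2$ vanishes. None of these is deep, but all of them must be checked explicitly for the dichotomy governed by the sign of $r(\mu)-c(\mu)$ to be airtight, and they are what turn the clean heuristic ($\rho_1$ makes a partition taller, $\psi_2$ makes it wider) into a rigorous bijection.
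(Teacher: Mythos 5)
Your proposal is correct and follows essentially the same route as the paper: you identify the inverse operations (delete the leftmost column and append a bottom row one box shorter, resp.\ delete the bottom row and prepend a column two boxes shorter) and observe that exactly one of them is defined according to whether the first row is at most as long as the first column or strictly longer, then iterate down to the empty diagram. Your $(r,c)$-bookkeeping just makes explicit the dichotomy $j\leq k$ versus $j>k$ that the paper states directly.
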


\begin{proof}
Notice that $\rho_1$ and $\psi_2$ have natural inverses $\rho_1^{-1}$ and $\psi_2^{-1}$. The map $\rho_1^{-1}$ removes the first column, takes away one box, and inserts it as a new first row, while the map $\psi_2^{-1}$ removes the first row, takes away two boxes, and inserts it as a new first column. We require that the outcomes are Young diagrams, which translates to $\rho_1^{-1}$  only being well-defined when $j\leq k$, and $\psi_2^{-1}$ only being well-defined when $j>k$. Thus, one and only one of these maps is well-defined, and for each $\lambda$, there is a unique sequence of $\rho_1^{-1}$s and $\psi_2^{-1}$s that, when applied to $\lambda$, yield the empty diagram.
\end{proof}

Let $P_n$ be the set of partitions of size $n$ and $C_n^{\{1,2\}}$ the set of $\{1,2\}$-compositions of $n$. Proposition \ref{prop:decompose} allows us to define an injective map
\begin{align*}
\phi:P_n&\rightarrow C_n^{\{1,2\}},
\end{align*}
More precisely, after writing $\lambda$ as a sequence of $\rho_1$s and $\psi_2$s applied to the empty diagram, $\phi(\lambda)$ is the corresponding sequence of subscripts. This map is far from surjective. In fact, the size of $C_n^{\{1,2\}}$ is the $(n+1)$st Fibonacci number, which is known to grow at a much greater rate than the size of $P_n$. 

From our perspective, the important compositions are those that lie in the image of $\phi$.

\begin{definition}
We say that a $\{1,2\}$-composition of $n$ is \emph{admissible} if it is in the image of $\phi$.
\end{definition}

The following example exhibits a subsequence that will never appear in an admissible composition.

\begin{example}\label{example:unadmissible}
For any partition $\lambda$, $\psi_2\psi_2\rho_1\psi_2\lambda$ is not a well-defined Young diagram; see Figure \ref{fig:fig3.19}. Thus, any $\{1,2\}$-composition containing the subsequence $(2,2,1,2)$ is not admissible.

 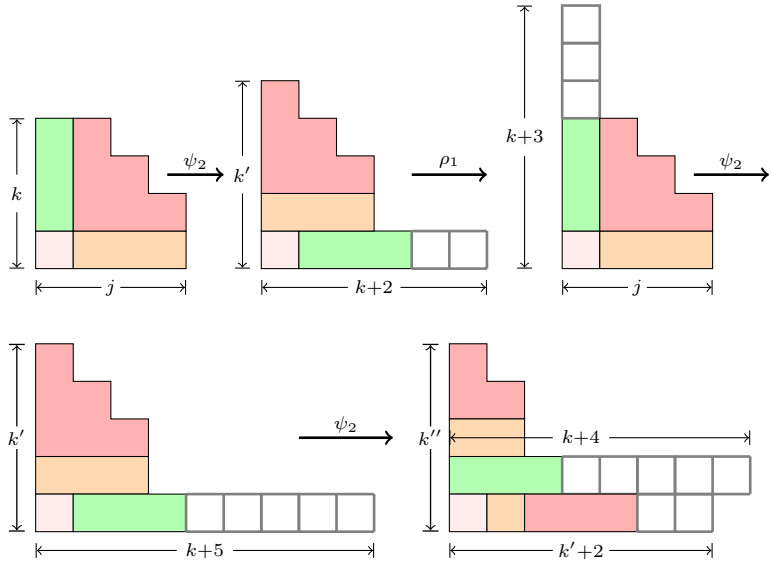
\begin{figure}[h]
        \centering
\begin{tikzpicture}[scale=0.5]
    % FIGURE 1
    \draw[fill=pink!30] (0,7) rectangle (1,8);
    \draw[fill=orange!30] (1,7) rectangle (4,8);
    \draw[fill=green!30] (0,8) rectangle (1,11);
    
    % POLYGON
    \draw[fill=red!30] (1,8) -- (4,8) -- (4,9) -- (3,9) -- (3,10) -- (2,10) -- (2,11) -- (1,11) -- cycle;
    
    % Horizontal Delimits
    \draw[|<-] (0,6.5) -- (1.75,6.5);
    \draw[->|] (2.25,6.5) -- (4,6.5);
    \node[] at (2,6.5) {\tiny $j$};
    
    % Vertical Delimits
    \draw[|<-] (-0.5,11) -- (-0.5,9.5);
    \draw[->|] (-0.5,8.5) -- (-0.5,7);
    \node[] at (-0.5,9) {\tiny $k$};
    
    %------------------------------------------------------------------------------------------
    \draw[->,line width=1pt] (3.5,9.5) -- (5,9.5);
    \node at (4.25,9.85) {\tiny ${\psi_2}$};
    %------------------------------------------------------------------------------------------
    
    % FIGURE 2
    \draw[fill=pink!30] (6,7) rectangle (7,8);
    \draw[fill=green!30] (7,7) rectangle (10,8);
    \draw[color=gray,line width=1pt] (10,7) grid (12,8);
    \draw[fill=orange!30] (6,8) rectangle (9,9);
    
    % POLYGON
    \draw[fill=red!30] (6,9) -- (9,9) -- (9,10) -- (8,10) -- (8,11) -- (7,11) -- (7,12) -- (6,12) -- cycle;
    
    % Horizontal Delimits
    \draw[|<-] (6,6.5) -- (8.3,6.5);
    \draw[->|] (9.7,6.5) -- (12,6.5);
    \node[] at (9,6.5) {\tiny $k{+}2$};
    %\node[] at (7.5,8.5) {\tiny $j{-}1$};
    
    % Vertical Delimits
    \draw[|<-] (5.5,12) -- (5.5,10);
    \draw[->|] (5.5,9) -- (5.5,7);
    \node[] at (5.5,9.5) {\tiny $k'$};
    
    %------------------------------------------------------------------------------------------
    \draw[->,line width=1pt] (10,9.5) -- (12,9.5);
    \node at (11,9.85) {\tiny ${\rho_1}$};
    %------------------------------------------------------------------------------------------
    
    % FIGURE 3
    \draw[fill=pink!30] (14,7) rectangle (15,8);
    \draw[fill=orange!30] (15,7) rectangle (18,8);
    \draw[fill=green!30] (14,8) rectangle (15,11);
    \draw[color=gray,line width=1pt] (14,11) grid (15,13);
    \draw[color=gray,line width=1pt] (14,13) rectangle (15,14);
    
    % POLYGON
    \draw[fill=red!30] (15,8) -- (18,8) -- (18,9) -- (17,9) -- (17,10) -- (16,10) -- (16,11) -- (15,11) -- cycle;
    
    % Vertical Delimits
    \draw[|<-] (13,14) -- (13,11);
    \draw[->|] (13,10) -- (13,7);
    \node[] at (13,10.5) {\tiny $k{+}3$};
    
    % Horizontal Delimits
    \draw[|<-] (14,6.5) -- (15.75,6.5);
    \draw[->|] (16.25,6.5) -- (18,6.5);
    \node[] at (16,6.5) {\tiny $j$};
    
    %------------------------------------------------------------------------------------------
    \draw[->,line width=1pt] (17.5,9.5) -- (19.5,9.5);
    \node at (18.5,9.85) {\tiny ${\psi_2}$};
    %------------------------------------------------------------------------------------------

    % FIGURE 4
    % Polygon
    \draw[fill=red!30] (0,2) -- (3,2) -- (3,3) -- (2,3) -- (2,4) -- (1,4) -- (1,5) -- (0,5) --cycle;
    
    % Row 2
    \draw[fill=orange!30] (0,1) rectangle (3,2);
    
    % Row 1
    \draw[fill=pink!30] (0,0) rectangle (1,1);
    \draw[fill=green!30] (1,0) rectangle (4,1);
    \draw[color=gray,line width=1pt] (4,0) grid (6,1);
    \draw[color=gray,line width=1pt] (6,0) rectangle (7,1);
    \draw[color=gray,line width=1pt] (7,0) grid (9,1);
    
    % Vertical Delimits
    \draw[|<-,line width=0.5pt] (-0.5,5) -- (-0.5,2.8);
    \draw[->|,line width=0.5pt] (-0.5,2.2) -- (-0.5,0);
    \node at (-0.5,2.5) {\tiny $k'$};
    
    % Horizontal Delimits
    \draw[|<-] (0,-0.5) -- (3.75,-0.5);
    \draw[->|] (5.25,-0.5) -- (9,-0.5);
    \node at (4.5,-0.5) {\tiny $k{+}5$};
    
    % Inside Labels
    %\node at (1.5,1.5) {\tiny $j{-}1$};
    
    % Dotted line
    %\draw[color=brown,dashed,line width=2pt] (1,5.5) -- (1,-0.5);
    
    %------------------------------------------------------------------------------------------
    \draw[->,line width=1pt] (7,2.5) -- (9.5,2.5);
    \node at (8.25,2.85) {\tiny ${\psi_2}$};
    %------------------------------------------------------------------------------------------
    
    % FIGURE 5
    % Polygon
    \draw[fill=red!30] (11,3) -- (13,3) -- (13,4) -- (12,4) -- (12,5) -- (11,5) -- cycle;
    
    % Row 3
    \draw[fill=orange!30] (11,2) rectangle (13,3);
    
    % Row 2
    \draw[fill=green!30] (11,1) rectangle (14,2);
    \draw[color=gray,line width=1pt] (14,1) grid (16,2);
    \draw[color=gray,line width=1pt] (16,1) rectangle (17,2);
    \draw[color=gray,line width=1pt] (17,1) grid (19,2);
    
    % Row 1
    \draw[fill=pink!30] (11,0) rectangle (12,1);
    \draw[fill=orange!30] (12,0) rectangle (13,1);
    \draw[fill=red!30] (13,0) rectangle (16,1);
    \draw[color=gray,line width=1pt] (16,0) grid (18,1);

    % Vertical Delimits
    \draw[|<-,line width=0.5pt] (10.5,5) -- (10.5,2.8);
    \draw[->|,line width=0.5pt] (10.5,2.2) -- (10.5,0);
    \node at (10.5,2.5) {\tiny $k''$};
    
    % Horizontal Delimits
    \draw[|<-] (11,-0.5) -- (13.75,-0.5);
    \draw[->|] (15.25,-0.5) -- (18,-0.5);
    \node at (14.5,-0.5) {\tiny $k'{+}2$};
    
     \draw[|<-] (11,2.5) -- (13.75,2.5);
    \draw[->|] (15.25,2.5) -- (19,2.5);
    \node at (14.5,2.5) {\tiny $k{+}4$};
    
    % Inside Labels
    %\node at (14.5,1.5) {\tiny $k{+}4$};
    %\node at (12,2.5) {\tiny $j{-}2$};
\end{tikzpicture}
\caption{The sequence $\psi_2 \psi_2 \rho_1 \psi_2$ is undefined for any $\lambda$ because $k' \leqslant k + 1$.}\label{fig:fig3.19}
\end{figure}
\end{example}

\subsection{The effect of $\rho_1$ and $\psi_2$ on $\w(-)$}

We now study how $\w(-)$ behaves with respect to the decomposition in Proposition \ref{prop:decompose}. In order to do so, we introduce a few preliminary conventions and definitions. Start by placing each Young diagram in the first quadrant of $\R^2$ so that the lower left corner is at the origin and each box is a unit square.

\begin{definition}
The \emph{boundary sequence} of $\lambda$ is the sequence along the northeast boundary of $\lambda$ that is induced by the labeling in Figure \ref{fig:fig3.4}.
\end{definition}

\begin{figure}[h]
    \centering
    \begin{tikzpicture}[scale=0.75]
    % GRID    
    \draw[step=1cm,gray!50] (0,0) grid (8,6);
    
    % AXES
    \draw[->,draw=black,line width=0.5pt] (0,-0.25) -- (0,7) node[anchor=north east] {\small };
    \draw[->,draw=black,line width=0.5pt] (-0.25,0) -- (9,0) node[anchor=west] {\small };
    %\node at (-0.125,-0.125) {\tiny $0$};
    
    % NUMBER LABELS
    \foreach \x in {0,1,2,3,4,5,6,7} \foreach \y in {0,1,2,3,4,5,6}
        \draw[color=black] node at (\x+.5,\y) {\scriptsize \FPeval{\result}{clip(\y+\x+1)}\result};
        
    \foreach \x in {0,1,2,3,4,5,6,7,8} \foreach \y in {0,1,2,3,4,5}
        \draw[color=black] node at (\x,\y+.5) {\scriptsize \FPeval{\result}{clip(\y+\x)}\result};
\end{tikzpicture}
    \caption{Line segments in the first quadrant.}
    \label{fig:fig3.4}
\end{figure}
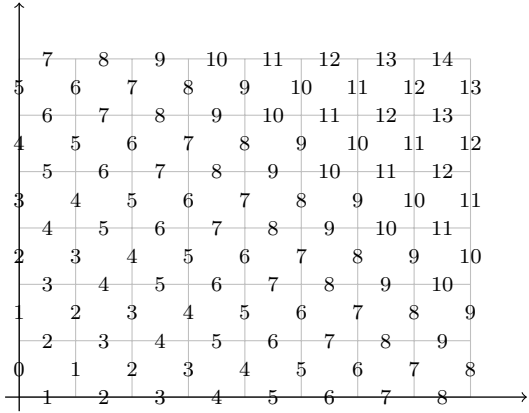

The following result shows how the boundary sequence is related to $\w(-)$.

\begin{lemma} \label{lem:boundary}
Let $\square \in \lambda$ and let $i$ be the label in the boundary sequence directly above $\square$ and $j$ the label in the boundary sequence directly to the right of $\square$. Then $\square$ contributes to $\w(\lambda)$ (i.e. $a(\square)+1=l(\square)$ mod $3$) if and only if $i = j\ \rm{mod}\ 3$.
\end{lemma}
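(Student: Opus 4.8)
The plan is to compute the two boundary labels $i$ and $j$ explicitly in terms of the position of $\square$ together with its arm and leg, and then to observe that their difference is exactly $a(\square)+1-l(\square)$; the lemma follows immediately upon reducing mod $3$.

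First I would fix coordinates. Place $\lambda$ in the first quadrant as prescribed and suppose the box $\square$ occupies the unit square $[x_0,x_0+1]\times[y_0,y_0+1]$ for integers $x_0,y_0\geq 0$. Recall from Figure \ref{fig:fig3.4} that the horizontal segment $[x,x+1]\times\{y\}$ carries the label $x+y+1$, while the vertical segment $\{x\}\times[y,y+1]$ carries the label $x+y$; the only thing one must read carefully off the figure is the extra $+1$ on horizontal segments.

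Next I would locate the two relevant boundary segments. Since $a(\square)$ counts the boxes strictly above $\square$ in its column, the topmost box of column $x_0$ is $[x_0,x_0+1]\times[y_0+a(\square),\,y_0+a(\square)+1]$, and the boundary segment directly above $\square$ is its top edge, the horizontal segment at height $y_0+a(\square)+1$. Hence $i=x_0+(y_0+a(\square)+1)+1=x_0+y_0+a(\square)+2$. Likewise, since $l(\square)$ counts the boxes strictly to the right of $\square$ in its row, the rightmost box of row $y_0$ is $[x_0+l(\square),\,x_0+l(\square)+1]\times[y_0,y_0+1]$, and the boundary segment directly to the right of $\square$ is its right edge, the vertical segment at horizontal position $x_0+l(\square)+1$. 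Hence $j=(x_0+l(\square)+1)+y_0=x_0+y_0+l(\square)+1$. Note that ``directly above'' always crosses a top edge (hence a horizontal segment) and ``directly to the right'' always crosses a right edge (hence a vertical segment), so there is no ambiguity about which labeling convention applies in each case.

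Finally, subtracting yields $i-j=a(\square)+1-l(\square)$, so $i\equiv j \pmod 3$ if and only if $a(\square)+1\equiv l(\square)\pmod 3$, which is precisely the condition that $\square$ contributes to $\w(\lambda)$. I do not expect a genuine obstacle here: everything is an exact integer identity with no case analysis or analytic input. The only care required is bookkeeping with the indices, namely confirming that the arm enters through the height of the topmost box in the column and the leg through the horizontal position of the rightmost box in the row, and keeping the two labeling conventions straight. I would be sure to carry out the argument for an arbitrary box, so that the identity $i-j=a(\square)+1-l(\square)$ is established uniformly rather than merely checked on the configuration depicted in the figure.
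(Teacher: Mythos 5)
Your proof is correct and is essentially the paper's argument: both establish the exact integer identity $i-j=a(\square)+1-l(\square)$ and then reduce mod $3$. The only cosmetic difference is that you compute the two labels explicitly in coordinates, whereas the paper obtains the same identity by telescoping the $\pm1$ increments along the boundary walk from $i$ to $j$.
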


\begin{proof}
In terms of $i$ and $j$, $a(\square)$ is the number of vertical line segments in the boundary between $i$ and $j$, and $l(\square)$ is the number of horizontal line segments in the boundary between $i$ and $j$. Since $i$ and $j$ are part of the boundary sequence, then taking a walk along the boundary gives a relation: 
\[
i \pm 1 \pm 1 \cdots \pm 1\pm 1 - 1 = j,
\]
where each $-1$ corresponds to a vertical line segment and each $+1$ corresponds to a horizontal line segment in the boundary of $\lambda$. Thus, we obtain $i - a(\square) + l(\square) - 1 = j$, implying that $i - j = a(\square) - l(\square) + 1$. 
\end{proof}

\begin{example} \label{ex:boundary}
Let $\lambda$ be the Young diagram in Figure \ref{fig:fig3.6}. The boundary sequence is $(3,4,5,6,5,6,7,6)$, and $\w(\lambda) = 4$. 
    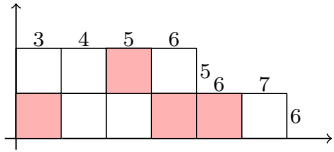
\begin{figure}[h]
        \centering
      \begin{tikzpicture}[scale=0.6]
     % AXES
    \draw[->,draw=black,line width=0.5pt] (0,-0.25) -- (0,3) node[anchor=north east] {\small};
    \draw[->,draw=black,line width=0.5pt] (-0.25,0) -- (7,0) node[anchor=west] {\small };
    
    % POLYGON
   % \draw[draw=red,line width=1.2pt] (0,0) -- (6,0) -- (6,1) -- (4 ,1) -- (4,2) -- (0,2) -- cycle;
    
    % GRIDS
    \draw[step=1cm] (0,0) grid (6,1);
    \draw[step=1cm] (0,1) grid (4,2);
    
    % NUMBER NODES
    \node[] at (0.5,2.2) {\scriptsize $3$};
    \node[] at (1.5,2.2) {\scriptsize $4$};
    \node[] at (2.5,2.2) {\scriptsize $5$};
    \node[] at (3.5,2.2) {\scriptsize $6$};
    \node[] at (4.2,1.5) {\scriptsize $5$};
    \node[] at (4.5,1.2) {\scriptsize $6$};
    \node[] at (5.5,1.2) {\scriptsize $7$};
    \node[] at (6.2,0.5) {\scriptsize $6$};
    
    % X NODES
    \draw[fill=red!30] (2,1) rectangle (3,2);
    \draw[fill=red!30] (0,0) rectangle (1,1);
    \draw[fill=red!30] (3,0) rectangle (4,1);
    \draw[fill=red!30] (4,0) rectangle (5,1);
\end{tikzpicture}
        \caption{Boundary sequence of $\lambda$. The four shaded boxes contribute to $\w(\lambda)$.}
        \label{fig:fig3.6}
    \end{figure}
\end{example}

We now come to the main result of this section.

\begin{theorem}\label{thm:compare}
If $\rho_1\lambda$ is defined, then 
\[
\w(\rho_1\lambda)=\w(\lambda).
\] 
If $\psi_2\lambda$ is defined, then 
\[
\w(\psi_2\lambda)=\w(\lambda)+1.
\]
\end{theorem}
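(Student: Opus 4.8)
The plan is to work entirely with the boundary labels of Figure~\ref{fig:fig3.4} and Lemma~\ref{lem:boundary}. Writing $\mu_0\geq\mu_1\geq\cdots$ for the column heights of $\lambda$ and $\lambda_0\geq\lambda_1\geq\cdots$ for its row lengths, the horizontal boundary segment above column $c$ carries the label $\mu_c+c+1$ and the vertical segment to the right of row $r$ carries the label $\lambda_r+r$. By Lemma~\ref{lem:boundary}, the box in column $c$ and row $r$ contributes to $\w(\lambda)$ if and only if $\mu_c+c+1\equiv\lambda_r+r\pmod 3$. The first step is to record this description and to read off the column heights and row lengths of $\rho_1\lambda$ and $\psi_2\lambda$ directly from those of $\lambda$; for instance, $\rho_1\lambda$ has first column of height $j+1$ and otherwise column heights $\mu_c-1$, shifted one step to the right (cf.\ Figure~\ref{fig:fig3.1}), and symmetrically for $\psi_2\lambda$.

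The second step isolates the part of each diagram on which nothing happens. For $\rho_1$, the map carries the box in column $c$, row $r$ of $\lambda$ with $r\geq 1$ to the box in column $c+1$, row $r-1$ of $\rho_1\lambda$; since this simultaneously lowers the relevant column height by one and raises the relevant row length by one, both the label above and the label to the right are preserved \emph{exactly}. Hence these boxes contribute identically to $\w(\lambda)$ and $\w(\rho_1\lambda)$, and it remains to compare the boxes of the bottom row of $\lambda$ with the boxes of the first column of $\rho_1\lambda$. The bottom-row boxes all share the label $j$ to their right, so their total contribution is the number of horizontal segments of $\lambda$ whose label is $\equiv j\pmod 3$; dually, the first-column boxes of $\rho_1\lambda$ all share the label $j+2$ above them, so their contribution is the number of vertical segments of $\rho_1\lambda$ whose label is $\equiv j+2\pmod 3$. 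The identical bookkeeping handles $\psi_2$ with rows and columns interchanged: the untouched boxes again match exactly, and one is left comparing the vertical segments of $\lambda$ (label $\equiv k+1$) with the horizontal segments of $\psi_2\lambda$ (label $\equiv k+2$).

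The third step is the combinatorial heart. I will establish a single lattice-path identity: reading the boundary of any partition from top-left to bottom-right as a $\pm1$ walk that starts at height $k$ and ends at height $j$, the number $h_m$ of horizontal segments labeled $m$ and the number $v_{m-1}$ of vertical segments labeled $m-1$ satisfy $h_m-v_{m-1}=[m\leq j]-[m\leq k]$, where $[P]$ is $1$ if $P$ holds and $0$ otherwise; indeed each side counts the signed number of crossings of a fixed level. Combined with an explicit description of the segment labels of $\rho_1\lambda$ in terms of those of $\lambda$ (the old vertical labels, with the bottom-row label $j$ removed, together with the consecutive run $k,k+1,\dots,j+1$), this identity converts the horizontal-segment count for $\lambda$ into a vertical-segment count for $\lambda$, after which the two sides of the $\rho_1$ comparison differ only by the number of integers in a short window lying in a prescribed residue class.

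The final step, which I expect to be the main obstacle, is exactly this residue count, and it is where the hypothesis that the group is $\Z_3$ becomes essential. The conversion via the path identity shifts the relevant residue from $j$ to $j-1$, whereas the first column of $\rho_1\lambda$ demands residue $j+2$; these agree precisely because $j-1\equiv j+2\pmod 3$. After this coincidence the remaining discrepancy collapses to the number of elements of $\{j+1,j+2\}$ that are $\equiv j\pmod 3$, which is zero, giving $\w(\rho_1\lambda)=\w(\lambda)$. For $\psi_2$ the analogous conversion matches residues for \emph{any} modulus, and the discrepancy becomes the number of elements of the length-three window $\{k+1,k+2,k+3\}$ in a fixed residue class, which is always exactly one; this produces the extra $+1$ and, as a bonus, explains why the $\psi_2$ statement generalizes to $\Z_k$ while the $\rho_1$ statement is special to $\Z_3$. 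The genuine work will be in pinning down the segment labels of $\rho_1\lambda$ and $\psi_2\lambda$ and in proving the path identity cleanly, including the case bookkeeping forced by $j\geq k-2$ and $j\leq k+3$.
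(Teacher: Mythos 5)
Your plan is correct, and the details you defer are routine: I checked that the label of the horizontal segment above column $c$ is $\mu_c+c+1$ and of the vertical segment right of row $r$ is $\lambda_r+r$; that the untouched boxes match label-for-label under $\rho_1$ and $\psi_2$; that the vertical labels of $\rho_1\lambda$ are the old ones with $j$ deleted and the run $k,\dots,j+1$ adjoined (dually for $\psi_2$); that your identity $h_m-v_{m-1}=[m\leq j]-[m\leq k]$ is the signed-crossing count of level $m-\tfrac12$ for the $\pm1$ walk given by the coordinate sum along the boundary, which runs from $k$ to $j$; and that the final residue counts come out to $0$ and $1$ as claimed, with the hypotheses $j\geq k-2$ and $j\leq k+3$ entering exactly where you say, namely to split $[1,j+2]$ as $[1,k]\sqcup[k+1,j+2]$ and $[1,k+3]$ as $[1,j]\sqcup[j+1,k+3]$. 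The paper's proof shares your first reduction --- only the removed row/column versus the inserted column/row needs comparing --- but runs the crossing argument differently: it overlays $\lambda$ and $\rho_1\lambda$ in a single picture (shifting one diagram right by a unit and the other up by a unit), introduces \emph{critical diagonals} $x+y=j+3i+3/2$ spaced three apart, and observes that $\w(\lambda-\rho_1\lambda)$ and $\w(\rho_1\lambda-\lambda)$ are the numbers of horizontal and vertical crossings of these diagonals by the one boundary path of $\lambda\cap\rho_1\lambda$; these agree (respectively differ by one for $\psi_2$) because that path begins and ends in the same (respectively adjacent) critical region. The overlay buys a proof with no bookkeeping of how the label multiset transforms under $\rho_1$ and $\psi_2$, which is the most delicate part of your route; your route, staying entirely on $\lambda$'s own boundary, makes the dependence on the modulus transparent --- in particular your observation that the $\rho_1$ statement hinges on $j-1\equiv j+2 \pmod 3$ while the $\psi_2$ statement is modulus-independent matches the paper's remark that the first part of Theorem \ref{thm:compare} is special to $\Z_3$.
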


\begin{proof}
In order to compare $\w(\lambda)$ with $\w(\rho_1\lambda)$, it is useful to overlap $\lambda$ and $\rho_1\lambda$ in the same diagram. To do so, shift $\lambda$ to the right by one unit in $\R^2$ and shift $\rho_1\lambda$ up by one unit. See Figure \ref{fig:fig3.10}, for example.

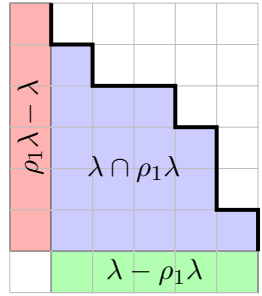
\begin{figure}[h]
        \centering
        \begin{tikzpicture}[scale=0.55]
    % RECTANGLES
    \draw[fill=red!30] (0,1) rectangle (1,7);
    \draw[fill=green!30] (1,0) rectangle (6,1);

    % GRID    

    % DIAGONALS
    %\draw[color=pink,line width=1pt] (1,7.5) -- (6.5,2);
    
    % POLYGON
    \draw[fill=blue!20] (1,1) -- (6,1) -- (6,2) -- (5,2) -- (5,4) -- (4,4)-- (4,5) -- (2,5) -- (2,6) -- (1,6) -- cycle;
    \draw[step=1cm,gray!50] (0,0) grid (6,7);
    \draw[line width = 1.5pt] (6,1) -- (6,2) -- (5,2) -- (5,4) -- (4,4)-- (4,5) -- (2,5) -- (2,6) -- (1,6) -- (1,7);
    
    \node[] at (3.5,0.5) {$\lambda - \rho_1 \lambda$};
    \node[] at (3,3) {$\lambda\cap\rho_1 \lambda$};
    \node[rotate=90] at (0.5,4) {$\rho_1 \lambda - \lambda$};
    
    %\foreach \x in {0,1,2,3,4,5} \foreach \y in {0,1,2,3,4,5,6,7}
        %\draw[color=black] node at (\x+.5,\y) {\scriptsize \FPeval{\result}{clip(\y+\x+1)}\result};
        
    %\foreach \x in {0,1,2,3,4,5,6} \foreach \y in {0,1,2,3,4,5,6}
        %\draw[color=black] node at (\x,\y+.5) {\scriptsize \FPeval{\result}{clip(\y+\x)}\result};
\end{tikzpicture}
        \caption{An example of $\lambda$ and $\rho_1\lambda$ overlapping in first quadrant. The bold path is the \emph{boundary} of $\lambda\cap\rho_1\lambda$. }
        \label{fig:fig3.10}
\end{figure}

The weight of $\lambda\cap\rho_1\lambda$ is independent of which diagram we consider, so we need only compare the weight of $\lambda-\rho_1\lambda$ to the weight of $\rho_1\lambda- \lambda$. If the length of the first row in $\lambda$ is $j$, define the \emph{critical diagonals} by
\[
\Delta_i:=\{x+y=j+3i+3/2\},
\]
and define the \emph{critical region} $R_i$ to be the region in the first quadrant between $\Delta_i$ and $\Delta_{i+1}$. In Figure \ref{fig:fig3.12}, we have depicted the critical diagonals and the region $R_0$ for a general $\lambda$.

\begin{figure}[h]
        \centering
     \begin{tikzpicture}[scale=0.55]
    % RECTANGLES
    \draw[fill=red!30] (0,1) rectangle (1,10);
    \draw[fill=green!30] (1,0) rectangle (9,1);
    
    % GRIDS
    \draw[color=gray!50] (0,0) grid (9,10);

    % NODES
    \node[] at (5,0.5) {$\lambda - \rho_1 \lambda$};
    \node[rotate=90] at (0.5,5.5) {$\rho_1 \lambda - \lambda$};
    
    \node[] at (4.5,5.5) {\small $R_0$};
    
    % HORIZONTAL DOTS
    \node[] at (6.6,5) {\large {$\dotsc$}};
    \node[] at (6,1) {\large {$\dotsc$}};
    \node[] at (3.6,5) {\large {$\dotsc$}};
    
    % VERTICAL DOTS
    \node[] at (1,5.6) {\large {$\vdots$}};
    \node[] at (3,5.6) {\large {$\vdots$}};
    \node[] at (4,4.6) {\large {$\vdots$}};
    \node[] at (6,5.6) {\large {$\vdots$}};
    \node[] at (7,4.6) {\large {$\vdots$}};
    \node[] at (9,5.6) {\large {$\vdots$}};
    
    % DIAGONALS
    \draw[color=pink] (0.5,5) -- (5.5,0);
    \draw[color=pink] (0.5,8) -- (8.5,0);
    \draw[color=pink] (1,10.5) -- (9.5,2);
    \draw[color=pink] (4,10.5) -- (9.5,5);
    \draw[color=pink] (7,10.5) -- (9.5,8);
   
    % RED HORIZONTAL SEGMENTS
    \draw[color=gray,line width=1pt] (1,4) -- (2,4) (2,3) -- (3,3) (3,2) -- (4,2) (4,1) -- (5,1) (6,2) -- (7,2) (5,3) -- (6,3) (5,9) -- (6,9) (6,8) -- (7,8) (5,6) -- (6,6); 
    \draw[color=gray,line width=1pt] (1,7) -- (2,7) (2,6) -- (3,6) (4,4) -- (5,4) (7,1) -- (8,1);
    \draw[color=gray,line width=1pt] (1,10) -- (2,10) (2,9) -- (3,9) (3,8) -- (4,8) (4,7) -- (5,7) (7,4) -- (8,4) (8,3) -- (9,3);
    \draw[color=gray,line width=1pt] (4,10) -- (5,10) (7,7) -- (8,7) (8,6) -- (9,6);
    \draw[color=gray,line width=1pt] (7,10) -- (8,10) (8,9) -- (9,9);
    
    % BLUE VERTICAL SEGMENTS
    \draw[color=gray,line width=1pt] (2,4) -- (2,3) (3,3) -- (3,2) (4,2) -- (4,1);
    \draw[color=gray,line width=1pt] (2,7) -- (2,6) (5,4) -- (5,3) (6,3) -- (6,2) (7,2) -- (7,1);
    \draw[color=gray,line width=1pt] (2,10) -- (2,9) (3,9) -- (3,8) (4,8) -- (4,7) (5,7) -- (5,6) (8,4) -- (8,3) (9,3) -- (9,2);
    \draw[color=gray,line width=1pt] (5,10) -- (5,9) (6,9) -- (6,8) (7,8) -- (7,7) (8,7) -- (8,6);
    \draw[color=gray,line width=1pt] (8,10) -- (8,9) (9,9) -- (9,8);
    
    % LABELS
    \node[] at (-0.15,1.5) {\tiny $1$};
    \node[] at (-0.15,8.5) {\tiny $j$};
    \node[] at (-0.45,9.5) {\tiny $j{+}1$};
    
    \node[] at (1.5,-0.25) {\tiny $2$};
    \node[] at (7.5,-0.25) {\tiny $j$};
    \node[] at (8.5,-0.25) {\tiny $j{+}1$};
    
    \node[] at (0.5,10.2) {\tiny $j{+}3$};
    \node[] at (1.5,10.2) {\tiny $j{+}4$};
    \node[] at (2.5,9.5) {\tiny $j{+}3$};
    \node[] at (2.3,6.5) {\tiny $j$};
    \node[] at (5.5,9.5) {\tiny $j{+}6$};
    \node[] at (4.5,10.2) {\tiny $j{+}7$};
    \node[] at (1.5,7.2) {\tiny $j{+}1$};
    \node[] at (7.5,1.2) {\tiny $j{+}1$};
    \node[] at (8.5,1.2) {\tiny $j{+}2$};

    % \node[] at (6.45,8.5) {\tiny $j+3$};
   % \node[] at (7.45,1.5) {\tiny $j+3$};
    \node[] at (9.45,0.5) {\tiny $j{+}1$};
\end{tikzpicture}
        \caption{Critical diagonals and critical region $R_0$ for general $\lambda$ and $\rho_1\lambda$. The boundary of $\lambda\cap\rho_1\lambda$ must start at $(1,j+2)$ and end at $(j+1,1)$.}
        \label{fig:fig3.12}
    \end{figure}
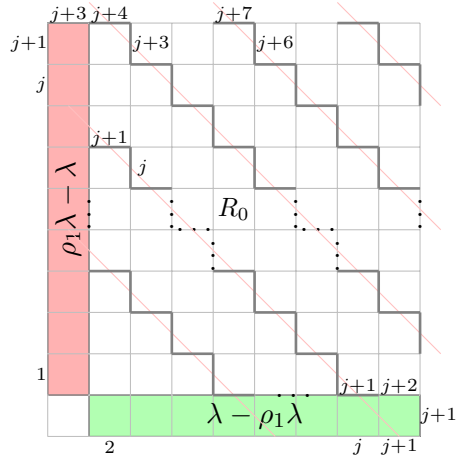

According to the labeling scheme in Figure \ref{fig:fig3.4}, all of the horizontal crossings of the critical diagonals are labeled $j+1$ mod $3$ and all of the vertical crossings of the critical diagonals are labeled $j+3$ mod $3$. Therefore, by Lemma \ref{lem:boundary}, $\w(\rho_1\lambda-\lambda)$ is the number of times the boundary of $\lambda\cap\rho_1\lambda$ crosses one of the critical diagonals vertically, and $\w(\lambda-\rho_1\lambda)$ is the number of times the boundary of $\lambda\cap\rho_1\lambda$ crosses one of the critical diagonals horizontally. Since, for any $\lambda$, the boundary of $\lambda\cap\rho_1\lambda$ begins and ends in the same critical region $R_0$, the number of vertical crossings is equal to the number of horizontal crossings, implying that $\w(\rho_1\lambda)=\w(\lambda)$.

Next, we turn to the case of $\psi_2$. As with the previous case, we overlap $\lambda$ and $\psi_2\lambda$ in the same diagram, and we need only compare $\w(\lambda-\psi_2\lambda)$ with $\w(\psi_2\lambda-\lambda)$. We define critical diagonals in this case by
\[
\Delta_i:=\{x+y=k+3i-3/2\},
\]
where $k$ is the height of the first column in $\lambda$, and we define the critical region $R_i$ to be the region between $\Delta_i$ and $\Delta_{i+1}$. See Figure \ref{fig:fig3.9}.

    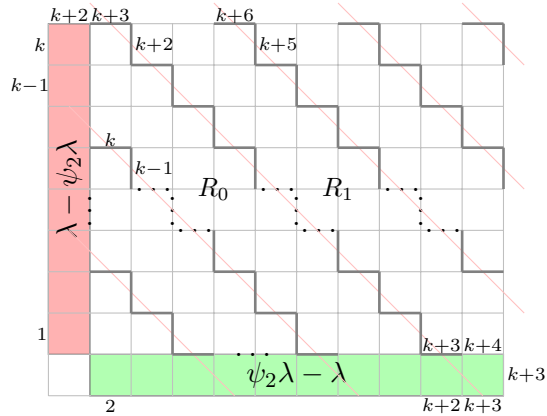
\begin{figure}[h]
        \centering
        \begin{tikzpicture}[scale=0.55]
    % RECTANGLES
    \draw[fill=red!30] (0,1) rectangle (1,9);
    \draw[fill=green!30] (1,0) rectangle (11,1);
    
    % GRID
    \draw[color=gray!50] (0,0) grid (11,9);

    % NODES
    \node[] at (6,0.5) {$\psi_2\lambda - \lambda$};
    \node[rotate=90] at (0.5,5) {$\lambda - \psi_2 \lambda$};
    
    \node[] at (4,5) {\small $R_0$};
    \node[] at (7,5) {\small $R_1$};
    
    % HORIZONTAL DOTS
    \node[] at (5.6,5) {\large {$\dotsc$}};
    \node[] at (2.6,5) {\large {$\dotsc$}};
    \node[] at (8.6,5) {\large {$\dotsc$}};
    \node[] at (6.6,4) {\large {$\dotsc$}};
    \node[] at (3.6,4) {\large {$\dotsc$}};
    \node[] at (9.6,4) {\large {$\dotsc$}};
    \node[] at (5,1) {\large {$\dotsc$}};
    
    % VERTICAL DOTS
    \node[] at (1,4.7) {\large {$\vdots$}};
    \node[] at (3,4.7) {\large {$\vdots$}};
    \node[] at (6,4.7) {\large {$\vdots$}};
    \node[] at (9,4.7) {\large {$\vdots$}};

    % DIAGONALS
    \draw[color=pink] (1,9.5) -- (10.5,0);
    \draw[color=pink] (4,9.5) -- (11.5,2);
    \draw[color=pink] (7,9.5) -- (11.5,5);
    \draw[color=pink] (10,9.5) -- (11.5,8);
    \draw[color=pink] (0.5,7) -- (7.5,0);
    \draw[color=pink] (0.5,4) -- (4.5,0);
    
    % RED HORIZONTAL SEGMENTS
    \draw[color=gray,line width=1pt] (1,9) -- (2,9) (2,8) -- (3,8) (3,7) -- (4,7) (4,6) -- (5,6) (7,3) -- (8,3) (8,2) -- (9,2) (9,1) -- (10,1) (5,8) -- (6,8) (6,7) -- (7,7) (5,2) -- (6,2) (6,1) -- (7,1);
    \draw[color=gray,line width=1pt] (4,9) -- (5,9) (7,6) -- (8,6) (10,3) -- (11,3);
    \draw[color=gray,line width=1pt] (1,6) -- (2,6)  (4,3) -- (5,3);
    \draw[color=gray,line width=1pt] (1,3) -- (2,3) (2,2) -- (3,2) (3,1) -- (4,1);
    \draw[color=gray,line width=1pt] (7,9) -- (8,9) (8,8) -- (9,8) (9,7) -- (10,7) (10,6) -- (11,6);
    \draw[color=gray,line width=1pt] (10,9) -- (11,9);
    
    % BLUE VERTICAL SEGMENTS
    \draw[color=gray,line width=1pt] (2,9) -- (2,8) (3,8) -- (3,7) (4,7) -- (4,6) (7,4) -- (7,3) (8,3) -- (8,2) (9,2) -- (9,1) (2,5) -- (2,6) (5,5) -- (5,6) (8,5) -- (8,6);
    \draw[color=gray,line width=1pt] (5,9) -- (5,8) (6,8) -- (6,7) (7,7) -- (7,6) (10,4) -- (10,3);
    \draw[color=gray,line width=1pt] (4,4) -- (4,3) (5,3) -- (5,2) (6,2) -- (6,1);
    \draw[color=gray,line width=1pt] (2,3) -- (2,2) (3,2) -- (3,1);
    \draw[color=gray,line width=1pt] (8,9) -- (8,8) (9,8) -- (9,7) (10,7) -- (10,6);
    \draw[color=gray,line width=1pt] (11,9) -- (11,8) (11,5) -- (11,6);

    % LABELS
    \node[] at (0.5,9.2) {\tiny $k{+}2$};
    \node[] at (1.5,9.2) {\tiny $k{+}3$};
    \node[] at (4.5,9.2) {\tiny $k{+}6$};
    \node[] at (2.55,8.5) {\tiny $k{+}2$};
    \node[] at (2.55,5.5) {\tiny $k{-}1$};
    \node[] at (5.55,8.5) {\tiny $k{+}5$};
    \node[] at (9.5,1.2) {\tiny $k{+}3$};
    \node[] at (10.5,1.2) {\tiny $k{+}4$};
    \node[] at (1.5,6.2) {\tiny $k$};
    \node[] at (11.55,0.5) {\tiny $k{+}3$};
    \node[] at (-0.15,1.5) {\tiny $1$};
    \node[] at (-0.45,7.5) {\tiny $k{-}1$};
    \node[] at (-0.2,8.5) {\tiny $k$};
    \node[] at (1.5,-0.25) {\tiny $2$};
    \node[] at (9.5,-0.25) {\tiny $k{+}2$};
    \node[] at (10.5,-0.25) {\tiny $k{+}3$};
\end{tikzpicture}
        \caption{Critical diagonals and critical regions $R_0$ and $R_1$ for general $\lambda$ and $\psi_2\lambda$.}
        \label{fig:fig3.9}
    \end{figure}
    
By Lemma \ref{lem:boundary}, $\w(\lambda-\psi_2\lambda)$ is the number of times the boundary of $\lambda\cap\psi_2\lambda$ crosses one of the critical diagonals vertically, and $\w(\psi_2\lambda-\lambda)$ is the number of times the boundary of $\lambda\cap\psi_2\lambda$ crosses one of the critical diagonals horizontally. Since, for any $\lambda$, the boundary of $\lambda\cap\psi_2\lambda$ begins in the critical region $R_0$ and ends in the critical region $R_1$, the number of horizontal crossings is one more than the number of vertical crossings, proving that $\w(\psi_2\lambda)=\w(\lambda)+1$.
\end{proof}

\section{Computation of highest Betti numbers}

By Proposition \ref{prop:decompose} and Theorem \ref{thm:compare}, the left-hand side of Conjecture \ref{combinatorialconjecture} can be written as a sum over admissible $\{1,2\}$-compositions, where any composition consisting of $i$ ones and $j$ twos contributes a factor of $q^{i+2j}t^j$. From this, we immediately see that the last part of Theorem \ref{maintheorem} holds: $\widetilde{b}_{k,n}=0$ for all $k>n/2$. To prove the first part of Theorem \ref{maintheorem}, we must show that the composition $(2,2,\dots,2,2)$ is admissible. To prove the second part, we must prove that the three compositions $(1,2,\dots,2,2)$, $(2,1,2,\dots,2,2)$, and $(2,2,\dots,2,1)$ are all admissible; Example \ref{example:unadmissible} then shows that these are the only three admissible compositions contributing to the coefficient of $q^{1+2j}t^j$ for $j\geq 2$. In order to prove that these compositions are all admissible, we introduce a new definition.

\begin{definition} \label{def:stair}
Let $\lambda$ denote a Young diagram with columns of height $c_i$, where $1 \leqslant i \leq j$. 
We say $\lambda$ has the \textit{stair-step property} if $c_i(\lambda) - 1 \leqslant c_{i + 1}(\lambda)$ for all $i < j$. If $\lambda$ has the stair-step property, then the \textit{landing number} of $\lambda$ is 
\[
L(\lambda) = |\{i: 1 \leqslant i < j,\ c_i(\lambda) = c_{i + 1}(\lambda)\}|.
\]
\end{definition}

The following result shows that $\psi_2$ preserves the property that $L(\lambda)\leq 2$.

\begin{lemma}\label{lem:stair}
If $\lambda$ has the stair-step property with $L(\lambda)\leq 2$, then $\psi_2\lambda$ is well-defined and has the stair-step property with $L(\psi_2\lambda)\leq 2$.
\end{lemma}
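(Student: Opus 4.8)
The plan is to recast everything in terms of the weakly decreasing vector of column heights $(c_1,\dots,c_m)$ of $\lambda$ and to read the action of $\psi_2$ off its definition. As Figure~\ref{fig:fig3.2} shows, $\psi_2$ deletes the first column, raises each surviving column by one (every such column now rests on the inserted bottom row), and pads on the right with height-one columns until the new bottom row attains its full length $c_1+2$. Thus
\[
\psi_2(c_1,\dots,c_m)=\bigl(c_2+1,\,\dots,\,c_m+1,\,\underbrace{1,\dots,1}_{c_1+3-m}\bigr),
\]
a vector with $c_1+2$ entries. The entire lemma then reduces to controlling two quantities: the number $c_1+3-m$ of appended height-one columns, and the height $c_m$ of the last column of $\lambda$.

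First I would settle well-definedness. Under the stair-step property each of the $m-1$ differences $c_i-c_{i+1}$ equals $0$ or $1$, and exactly $L(\lambda)$ of them equal $0$; hence $c_1-c_m=(m-1)-L(\lambda)$, so $m=c_1-c_m+1+L(\lambda)$. Since $c_m\ge 1$ and $L(\lambda)\le 2$, this yields $m\le c_1+2$. The first row of $\lambda$ has length $j=m$ and its first column has height $k=c_1$, so $m\le c_1+2$ is exactly the condition $j\le k+3$ guaranteeing that $\psi_2\lambda$ is a Young diagram; moreover $m\le c_1+2$ ensures at least one column is appended.

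Next I would check the stair-step property of $\psi_2\lambda$ by scanning consecutive differences of the displayed vector. Inside the shifted block they are $(c_i+1)-(c_{i+1}+1)=c_i-c_{i+1}\in\{0,1\}$, and among the trailing ones they vanish; the only delicate transition is from the last inherited column, of height $c_m+1$, down to the first appended column, of height $1$, a drop of exactly $c_m$. This is a single legal step precisely when $c_m=1$, and this is the heart of the argument: the proof must use that a stair-step diagram descends to a terminal box of height one. This indeed holds for the diagrams to which the lemma is applied, since every output of $\psi_2$ ends in an appended height-one column; granting $c_m=1$, the identity above becomes $m=c_1+L(\lambda)$, so the number of appended ones is $c_1+3-m=3-L(\lambda)$.

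Finally I would count landings. The flat steps of $\psi_2\lambda$ come in three types: those within the shifted block, which coincide with the flat steps of $(c_2,\dots,c_m)$ and so number $L(\lambda)-[\,c_1=c_2\,]$; the transition $c_m+1=2\mapsto 1$, which is never flat; and the flats among the $3-L(\lambda)$ trailing ones, numbering $(3-L(\lambda))-1=2-L(\lambda)$. Summing,
\[
L(\psi_2\lambda)=\bigl(L(\lambda)-[\,c_1=c_2\,]\bigr)+\bigl(2-L(\lambda)\bigr)=2-[\,c_1=c_2\,]\le 2 .
\]
The main obstacle is exactly the corner transition of the third paragraph: forcing the drop from the last genuine column to the appended height-one columns to be a single step requires pinning down $c_m$, and this is the one place where the full strength of the stair-step shape—its descent to a single terminal box—is indispensable. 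Every other step is a routine difference or flat-step count.
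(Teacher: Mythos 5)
Your proof is correct in substance and follows the same route as the paper's: translate $\psi_2$ into its action on the column-height vector, bound the number of columns via the stair-step identity $c_1-c_m=(m-1)-L(\lambda)$, and then account for the flat steps of $\psi_2\lambda$. Where the paper argues case-by-case on $L(\lambda)\in\{0,1,2\}$ (``creates a new $3$-box landing,'' ``might break up an old one''), you give the uniform count $L(\psi_2\lambda)=2-[\,c_1=c_2\,]$, which is sharper, subsumes all three cases, and is consistent with the paper's conclusions; that is a tidying-up rather than a different method.

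The one substantive point is the one you flagged yourself: the argument needs $c_m=1$, both to make the drop from the last inherited column (height $c_m+1$) to the first appended height-one column a legal step, and to get the count $c_1+3-m=3-L(\lambda)$ of trailing ones. Definition \ref{def:stair} only imposes $c_i-1\leq c_{i+1}$ for $i<j$, so it does not force $c_m=1$: the diagram with column heights $(2,2)$ satisfies the stated hypotheses with $L=1$, yet $\psi_2$ sends it to $(3,1,1,1)$, which is not stair-step, so the lemma as literally written fails there. The intended reading is evidently that the stair-step condition also holds at $i=j$ with the convention $c_{j+1}=0$ (the staircase descends to height one); this is exactly what the paper uses silently when it asserts $k\leq j\leq k+2$, an inequality that is false for $(2,2)$. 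Your patch --- observing that every diagram arising in Proposition \ref{prop:admissible} ends in a height-one column --- does make the application go through, but strictly speaking it proves a corrected lemma (with $c_j=1$ added to the hypotheses) rather than the stated one. The clean fix is to build $c_j=1$ into the definition or the hypothesis; with that in place your argument closes with no appeal to the downstream use, and is if anything more rigorous than the paper's.
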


\begin{proof}
If $j$ is the number of boxes in the first row of $\lambda$ and $k$ is the number of boxes in the first column, then the assumptions imply that $k\leq j \leq k+2$. Since $j\leq k+3$, $\psi_2$ is well-defined. Since $\lambda$ satisfies the stair-step property, then $\psi_2\lambda$ will satisfy the stair-step property as long as the last step is not a big one; in other words, we need to check that the last column does not have two boxes. However, the only way that $\psi_2\lambda$ has two boxes in the last column is if $j=k+3$, which is disallowed by our assumptions on $\lambda$.

Finally, to verify the condition on the landing sets, we check case-by-case. If $L(\lambda)=0$, then $j=k$, and $\psi_2\lambda$ creates a new $3$-box landing in the final three columns, so that $L(\psi_2\lambda)=2$. If $L(\lambda)=1$, then $j=k+1$, and $\psi_2\lambda$ contains a new $2$-box landing in the final two columns, so $L(\psi_2\lambda)\in\{1,2\}$ (we take into account that $\psi_2\lambda$ could break up the landing that already existed in the first two columns of $\lambda$, see Figure \ref{fig:fig3.18}). Similarly, if $L(\lambda)=2$, then $j=k+3$ and $\psi_2\lambda$ does not create any new landings (but it might break up an old one), so $L(\psi_2\lambda)\in\{1,2\}$.
 \begin{figure}[h]
                \centering
                \begin{tikzpicture}[scale=0.5]
    % RECTANGLE
    \draw[draw=red,fill=red!30] (0,0) rectangle (1,5);
    \draw[draw=red,fill=red!30] (9,0) rectangle (14,1);
    \draw[draw=green,fill=green!30] (14,0) rectangle (16,1);
    
    % GRIDS FIRST FIGURE
    %\draw[step=1cm,color=gray!30] (0,0) grid (7,5);
    \draw[step=1cm,color=gray!30] (0,0) grid (1,5);
    \draw[step=1cm,color=gray!30] (1,0) grid (5,2);
    \draw[step=1cm,color=gray!30] (1,2) grid (3,4);
    
    %GRIDS SECOND FIGURE
    %\draw[step=1cm,color=gray!30] (9,0) grid (16,6);
    \draw[step=1cm,color=gray!30] (9,0) grid (16,1);
    \draw[step=1cm,color=gray!30] (9,1) grid (13,3);
    \draw[step=1cm,color=gray!30] (9,3) grid (11,5);
    
    % OPEN POLYGON AND LINE SEGMENT OF FIRST FIGURE
    \draw[color=gray,line width=1pt] (2,5) -- (2,4) -- (3,4) -- (3,3) -- (4,3) -- (4,2) (5,2) -- (5,1) -- (6,1) -- (6,0) -- (1,0);
    \draw[color=gray,line width=1pt] (1,0) -- (1,5) -- (2,5);
    \draw[color=gray,line width=1pt] (4,2) -- (5,2);
    
    % OPEN POLYGON AND LINE SEGMENT OF SECOND FIGURE
    \draw[color=gray,line width=1pt] (13,3) -- (13,2) -- (14,2) -- (14,1) -- (9,1) -- (9,6) -- (10,6) -- (10,5) -- (11,5) -- (11,4) -- (12,4) -- (12,3);
    \draw[color=gray,line width=1pt] (12,3) -- (13,3);
    
%    % LABELS OF FIRST FIGURE
%    \node[] at (-0.85,2.5) {\scriptsize $\lambda$ = };
%    \node[] at (0.5,-0.5) {\scriptsize $c_1$};
%    \node[] at (1.5,-0.5) {\scriptsize $c_2$};
%    \node[] at (2.5,-0.5) {\scriptsize $\dotsc$};
%    \node[] at (3.5,-0.5) {\scriptsize $\dotsc$};
%    \node[] at (4.5,-0.5) {\scriptsize $\dotsc$};
%    \node[] at (5.5,-0.5) {\scriptsize $c_j$};
%    
%    % LABELS OF SECOND FIGURE
%    \node[] at (7.9,2.5) {\scriptsize $\psi_2\lambda$ = };
%    \node[] at (9.5,-0.5) {\scriptsize $\tilde c_1$};
%    \node[] at (10.5,-0.5) {\scriptsize $\tilde c_2$};
%    \node[] at (11.5,-0.5) {\scriptsize $\dotsc$};
%    \node[] at (12.5,-0.5) {\scriptsize $\dotsc$};
%    \node[] at (13.5,-0.5) {\scriptsize $\tilde c_k$};
%    %\node[] at (14.5,-0.5) {\scriptsize $c_{k{+}1}$};
%    %\node[] at (15.5,-0.5) {\scriptsize $c_{k{+}2}$};
    
      \draw[|<-] (0,-.5) -- (1.7,-.5);
    \draw[->|] (4.3,-.5) -- (6,-.5);
    \node at (3,-.5) {\tiny $j=k+1$};

     \draw[|<-] (9,-.5) -- (11.5,-.5);
    \draw[->|] (13.5,-.5) -- (16,-.5);
    \node at (12.5,-.5) {\tiny $k+2$};
    
    % ARROW AND LABEL
    \draw[->,draw=black,line width=1pt] (6,3) -- (8.5,3);
    \node[] at (7.25,3.5) {\tiny $\psi_2$};
\end{tikzpicture}
                \caption{A new landing set when $L(\lambda)=1$. If the old landing set did not occur in the first two columns, then the new diagram would have two landings.}
                \label{fig:fig3.18}
\end{figure}
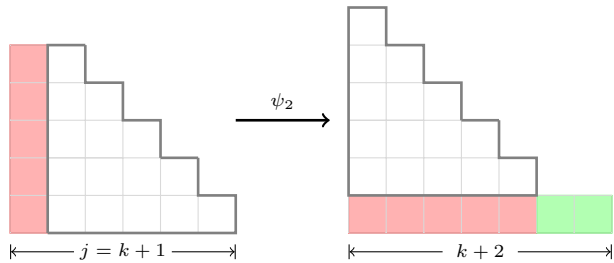

\end{proof}

With Lemma \ref{lem:stair} at our disposal, we are ready to prove the admissibility of the four compositions mentioned above.

\begin{proposition} \label{prop:admissible}
For every $n \geqslant 0$, $\psi_2^n \emptyset$, $\psi_2^n \rho_1\emptyset$, $\rho_1\psi_2^n\emptyset$, $\psi_2\rho_1\psi_2^n\emptyset$ are well-defined Young diagrams.
\end{proposition}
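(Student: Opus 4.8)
The plan is to build each of the four diagrams up from the empty partition one Dyson map at a time, using Lemma \ref{lem:stair} to control the towers of $\psi_2$'s and a short shape computation to handle the single $\rho_1$. Throughout I will carry the invariant that $\psi_2^n\emptyset$ and $\psi_2^n\rho_1\emptyset$ are not merely well-defined, but have the stair-step property with landing number $L\le 2$ \emph{and} last column of height $1$; the refinement about the last column is precisely what later permits the $\rho_1$ and the final $\psi_2$.

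First I would dispatch the two pure $\psi_2$-towers $\psi_2^n\emptyset$ and $\psi_2^n\rho_1\emptyset$. The base diagrams $\emptyset$ and $\rho_1\emptyset=(1)$ trivially have the stair-step property with $L=0$, and $\psi_2\emptyset=(2)$, $\psi_2\rho_1\emptyset=\psi_2(1)=(3)$ are well-defined and of the desired form (here the definedness $j\le k+3$ is immediate, and I verify these first two applications by hand since Lemma \ref{lem:stair}'s case analysis degenerates at $\emptyset$). For the inductive step, if $\lambda$ has the stair-step property with $L(\lambda)\le 2$ and last column of height $1$, then Lemma \ref{lem:stair} gives that $\psi_2\lambda$ is well-defined with the stair-step property and $L(\psi_2\lambda)\le 2$; moreover $c_j(\lambda)=1$ together with the stair-step property forces $j=k+L\le k+2<k+3$, so the last step is not a big one and $\psi_2\lambda$ again has last column of height $1$. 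Iterating propagates the full invariant along both towers.

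Next I would treat $\rho_1\psi_2^n\emptyset$. Writing $j,k$ for the first-row length and first-column height of $\lambda=\psi_2^n\emptyset$, the stair-step property together with $c_j=1$ gives $j=k+L\ge k$, whence $j\ge k-2$ and $\rho_1$ is defined on $\psi_2^n\emptyset$ (with the degenerate case $\rho_1\emptyset=(1)$). This settles the third family. The crux is the last family $\psi_2\rho_1\psi_2^n\emptyset$, where I must show a $\psi_2$ can follow the $\rho_1$. Here I would read off the shape of $\rho_1\lambda$: since $\rho_1$ converts the bottom row of length $j$ into a new first column of height $j+1$ and reattaches the columns of height $\ge 2$, the new first-column height is $k'=j+1$ while the new first-row length is $j'=1+|\{i:c_i(\lambda)\ge2\}|$. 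Because $\lambda$ has last column of height $1$, at least one column has $c_i<2$, so $j'\le j=k'-1$. The condition $j'\le k'+3$ for a subsequent $\psi_2$ then holds with room to spare, so $\psi_2\rho_1\psi_2^n\emptyset$ is well-defined.

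I expect the genuine content to lie in this last family: the work is in seeing that $\rho_1$ exchanges the roles of height and width, leaving the diagram tall relative to its width ($j'<k'$) so that a further $\psi_2$ is automatically admissible. By contrast, the two $\psi_2$-towers reduce immediately to Lemma \ref{lem:stair}, the definedness of the lone $\rho_1$ is the one-line inequality $j\ge k\ge k-2$, and the only real bookkeeping elsewhere is the degenerate behavior at $\emptyset$, handled by checking the first map directly.
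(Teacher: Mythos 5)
Your proof is correct and follows essentially the same route as the paper: Lemma \ref{lem:stair} handles the two $\psi_2$-towers, and the remaining content is the inequality checks $j\geq k-2$ for the lone $\rho_1$ and $j'\leq k'+3$ for the trailing $\psi_2$. The only difference is that you explicitly carry the extra invariant that the last column of $\psi_2^n\emptyset$ has height $1$, which is a slight strengthening that makes the step ``stair-step plus $L\leq 2$ implies $k\leq j\leq k+2$'' fully airtight (the paper asserts this directly), but it does not change the argument in any essential way.
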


\begin{proof}
Since $\lambda=\emptyset$ and $\lambda=\rho_1\emptyset=\square$ both have the stair-step property with $L(\lambda)\leq 2$, we can iteratively apply Lemma \ref{lem:stair} to prove that $\psi_2^n \emptyset$ and $\psi_2^n \rho_1\emptyset$ are well-defined for all $n\geq 0$. Since $\psi_2^n\emptyset$ satisfies the stair-step property with $L(\psi_2^n\emptyset)\leq 2$, we know that $k\leq j \leq k+2$. In particular, since $j\geq k-2$, $\rho_1$ can be applied, so $\rho_1\psi_2^n\emptyset$ is well-defined for any $n\geq 0$. After applying $\rho_1$ to $\psi_2^n\emptyset$, the new first row has length $j'\leq j+1$ and the new first column has height $k'=j+1$. Since $j'\leq k'+3$, $\psi_2$ can then be applied, so $\psi_2\rho_1\psi_2^n$ is well-defined for all $n\geq 0$.
\end{proof}

\bibliographystyle{abbrv}

\end{document}